\newcommand{\be}{\begin{equation}}
\newcommand{\en}{\end{equation}}
\newcommand{\bea}{\begin{eqnarray}}
\newcommand{\ena}{\end{eqnarray}}
\newcommand{\beano}{\begin{eqnarray*}}
\newcommand{\enano}{\end{eqnarray*}}
\newcommand{\bee}{\begin{enumerate}}
\newcommand{\ene}{\end{enumerate}}
\newcommand{\mc}{\mathcal}
\newcommand{\mb}{\mathbb}
\newcommand{\F}{{\mathcal F}}
\newcommand{\D}{{\mc D}}
\newcommand{\E}{{\mc E}}
\newcommand{\M}{{\sf M}}
\newtheorem{defn}{Definition}[section]
\newtheorem{thm}[defn]{Theorem}
\newtheorem{prop}[defn]{Proposition}
\newtheorem{lemma}[defn]{Lemma}
\newtheorem{cor}[defn]{Corollary}
\newtheorem{example}[defn]{Example}
\newtheorem{rem}[defn]{Remark}
\def\x{\relax\ifmmode {\mbox{*}}\else*\fi}
\newcommand{\beex}{\begin{example}$\!\!${\bf }$\;$\rm }
\newcommand{\enex}{ \end{example}}
\newcommand{\berem}{\begin{rem}$\!\!${\bf }$\;$\rm }
\newcommand{\enrem}{ \end{rem}}
\newcommand{\bedefi}{\begin{defn}$\!\!${\bf }$\;$\rm }
\newcommand{\findefi}{\end{defn}}
\newcommand{\ha}{^{\ast}}
\newcommand{\ip}[2]{\left\langle {#1}\left|{#2}\right.\right\rangle}
\def\H{{\mathcal H}}
\newcommand{\NN}{{\mathbb N}}
\newcommand{\dis}{\displaystyle}
\newcommand{\sss}{\scriptscriptstyle}
\newcommand{\gs}{\Psi}
\newcommand{\ggs}{\Omega}
\newcommand{\sggs}{_{\sss\ggs}}
\newcommand{\gi}{\iota}
\newcommand{\emb}{{\sf u}}
\newcommand{\gn}{\mathfrak{n}}
\newcommand{\gb}{\Upsilon}
\newcommand{\sgb}{_{\sss\gb}}
\newcommand{\ds}{\displaystyle}
\def\ker{{\sf Ker\,}}
\def\ran{{\sf Ran\,}}
\def\Re{\mathfrak {Re}\,}
\def\Im{\mathfrak {Im}\,}
\begin{document}
\title[Some Representation Theorems]
{Some Representation Theorems for Sesquilinear Forms}

\author{Salvatore di Bella}
\author{Camillo Trapani}
\address{Dipartimento di Matematica e Informatica,
Universit\`a di Palermo, I-90123 Palermo, Italy}

\email{svt.dibella@gmail.com; salvatore.dibella@unipa.it}
\email{camillo.trapani@unipa.it}


\begin{abstract} The possibility of getting a Radon-Nikodym type theorem and a Lebesgue-like decomposition for a non necessarily positive sesquilinear $\Omega$ form defined on a vector space $\D$, with respect to a given positive form $\Theta$ defined on $\D$, is explored.
The main result consists in showing that a sesquilinear form $\Omega$ is $\Theta$-regular, in the sense that it has a  Radon-Nikodym type representation, if and only if it satisfies a sort Cauchy-Schwarz inequality whose right hand side is implemented by a positive sesquilinear form which is $\Theta$-absolutely continuous. In the particular case where $\Theta$ is an inner product in $\D$, this class of sesquilinear form covers all standard examples. In the case of a form defined on a dense subspace $\D$ of Hilbert space $\H$ we give a sufficient condition for the equality $\Omega(\xi,\eta)=\ip{T\xi}{\eta}$, with $T$ a closable operator, to hold on a dense subspace of $\H$.
\end{abstract}

\maketitle

\section{Introduction}\label{sect_introd}
It is a very basic fact that to every linear operator $T$ defined on a subspace $\D$ of a Hilbert space $\H$ there corresponds a sesquilinear form $\Omega_T$ on $\D\times \D$ defined as
$$ \Omega_T (\xi, \eta)= \ip{T\xi}{ \eta}, \quad \xi, \eta\in \D,$$ and $\Omega_T$ is named the {\em sesquilinear form associated to $T$}.
It is certainly more and more interesting to consider the converse question: given a sesquilinear form $\Omega$ on $\D\times \D$ does there exists a linear operator $T$ such that $\Omega=\Omega_T$?
The problem has very well-known solutions if the Hilbert space is finite-dimensional or if the form $\Omega$ is bounded. The situation changes dramatically for unbounded sesquilinear forms.
Although several conditions on a sesquilinear form $\Omega$ are known (see, for instance \cite{kato, reedsimon, schmu}) for $\Omega$ to be the sesquilinear form associated to a linear operator $T$ in $\H$, a complete answer to the question remains unknown, in spite of the fact that this question has been taken under consideration by several authors (for the non-semibounded case, see, for instance, \cite{mcintosh, fleige, fleigehassidesnoo} and for a treatment in Kre\u{\i}n spaces \cite{hassikrein}). The most relevant results are the first and second representation Kato theorems concerning, respectively, closed sectorial forms and positive, or semibounded, forms (semiboundedness, in particular, seemed for long time to be an ineludible condition \cite{kato, edmundevans}). Sectoriality and positivity are clearly condition on the numerical range of the form. On the other hand, the closedness condition on sectorial form $\Omega$ means, roughly speaking that $\D$ can be made into a Hilbert space under a new norm $\|\cdot\|_\Omega$, generating a topology finer than the initial one,  and with respect to which  $\Omega$ is bounded. A construction of this kind is proposed in this paper for a sesquilinear form $\Omega$ with no particular assumptions on their numerical range in order to get a Radon-Nikodym theorem type for them. This means, if $\Omega$ is a sesquilinear form on $\D\times \D$, with $\D$ a dense subspace of Hilbert space,  that we look for a representation of the following kind:
\begin{equation}\label{eqn_introd}\Omega(\xi, \eta)= \ip{HY\xi}{H\eta}, \quad \forall \xi, \eta\in \D,\end{equation}
where $H$ is a positive self-adjoint operator in $\H$  and $Y$ is so that $HY$ is well-defined and closable in $\D$.

We consider this problem in a more general setting, by considering se\-squi\-li\-near forms $\Omega$ defined on a complex vector space $\D$. Properties of $\Omega$ are referred to a fixed positive sesquilinear form $\Theta$ on $\D\times \D$  in the same spirit of Hassi, Sebesty\'en and de Snoo  in \cite{hassi}; we will not follow however their approach but we prefer to use only Operator theory methods (we borrow, in fact, some techniques used in \cite[Ch. 9]{ait_book} and in \cite{Inoue} for certain functionals or positive sesquilinear forms on (partial) *-algebras; see also \cite{larussatriolo}).

 In Section \ref{sect_radon_nikodym} we consider a sesquilinear form $\Omega$ for which the set $\M(\Omega)$ of all positive sesquilinear forms $\Psi$ on $\D \times \D$ such that
$$ |\Omega(\xi,\eta)|\leq \Psi (\xi,\xi)^{1/2} \Psi (\eta,\eta)^{1/2}, \quad \forall \xi, \eta \in \D$$ is nonempty. Then we introduce the notion of $\Theta$-regular form and show that this is equivalent to $\Omega$ admitting a representation of the type \eqref{eqn_introd} (with an extra condition concerning the $\Theta$-absolute continuity of a certain positive $\Gamma_{\sss \Omega}$ constructed from $H$ and $Y$).

 In Section \ref{sect_lebesgue} we show that every sesquilinear form $\Omega$ for which $\M(\Omega)$ is nonempty allows a Lebesgue-like decomposition into the sum of a $\Theta$-regular and a $\Theta$-singular form (both non necessarily positive), generalizing in this way a series of results mostly concerned with semibounded forms \cite{simon, hassi,titkos}. It is apparent that the condition $\M(\Omega)\neq \emptyset$ plays here the same role that bounded variation plays for the classical Lebesgue decomposition theorem on measure spaces. A Lebesgue decomposition gives a natural relevance to singular forms, which, at least in the positive case, occur frequently in applications \cite{koshma}.

In Section \ref{sect_solvable} we focus on the case when $\D$ is a pre-Hilbert space and $\Theta$ is exactly the inner product of $\D$. In this situation we study the possible representation of a sesquilinear form $\Omega$ as $\Omega_T$, for some convenient operator $T$ defined on a subspace of $\D$. For this we introduce two new notions: that of {\em q-closed} form, which means, roughly speaking that $\D$ can be made into a Banach space under a new norm $\|\cdot\|_\Omega$, which makes possible the construction of a Banach-Gelfand triplet of spaces, and $\Theta$ is $\|\cdot\|_\Omega$-bounded on the smallest space of the triplet.  The second notion we introduce for $\Omega$ is that of {\em solvability}, this roughly means that $\Omega$ can be perturbed by a bounded sesquilinear form such that the corresponding operator acting in the triplet is bounded with bounded inverse. Under this assumption we prove that $\Omega=\Omega_T$, for some closed operator $T$. If this perturbation is a scalar multiple of the inner product this condition of solvability turns out to be a condition on the numerical range $\gn\sggs$ of $\Theta$ which simply means that $\gn\sggs$ does not fill the whole complex plane.

\section{Notations and preliminaries}\label{sect_preliminaries}

\label{sect_repres_sesq}
Let $\D$ be a complex vector space and $\Psi$ a sesquilinear form on $\D \times \D$.
 As usual, the {\em adjoint} form $\gs^*$ is defined by
$$ \gs^* (\xi, \eta)= \overline{\gs(\eta, \xi)}, \quad \xi, \eta \in \D.$$ If $\gs^*= \gs$, then $\gs$ is {\em symmetric}.
We also put

$$ \Re \gs = \frac{1}{2}(\gs +\gs^*) , \qquad \Im \gs = \frac{1}{2i}(\gs -\gs^*).$$
Both $\Re \gs$ and $\Im \gs$ are symmetric sesquilinear forms on $\D\times \D$ and
$$ \gs= \Re\gs +i \Im \gs.$$

 We set
$$ {\sf N}(\Psi) =  \{\xi \in \D: \Psi(\xi,\eta)=0, \, \forall \eta \in \D\}.$$
Then ${\sf N}(\Psi)$ is a subspace of $\D$.

If $\Psi$ is positive (i.e., $\Psi(\xi, \xi)\geq 0$, for every $\xi \in \D$), then it is symmetric and
$${\sf N}(\Psi) =\{\xi \in \D: \Psi(\xi,\xi)=0\}.$$
In this case, we can consider the quotient $\D/ {\sf N}(\Psi)$. We put $j_{\sss{\Psi}} (\xi):= \xi + {\sf N}(\Psi)$. Then $j_{\sss{\Psi}}(\D)=\D/ {\sf N}(\Psi)$ is a pre-Hilbert space with inner product
$$ \ip{j_{\sss{\Psi}}(\xi)}{j_{\sss{\Psi}}(\eta)}_{\sss{\Psi}}= \Psi(\xi, \eta).$$
We denote by $\H_{\sss{\Psi}}$ the Hilbert space completion of $j_{\sss{\Psi}}(\D)$. Its inner product will be denoted by $\ip{\cdot}{\cdot}_{\sss{\Psi}}$ and the corresponding norm by $\|\cdot\|_{\sss{\Psi}}$.

If $\Theta$, $\Psi$ are positive sesquilinear forms on $\D\times \D$  we write $\Theta \leq \Psi$ if $\Theta(\xi,\xi)\leq \Psi(\xi,\xi)$, for every $\xi \in \D$.
If $\Theta$, $\Psi$  are positive, we say that $\Psi$ {\em dominates} $\Theta$ if there exists $\gamma>0$ such that $\Theta \leq \gamma \Psi$; i.e.,
\begin{equation} \label{eq_oone}\Theta(\xi,\xi) \leq \gamma \Psi(\xi,\xi), \quad \forall \xi \in \D. \end{equation}

\section{Absolutely continuous and regular sesquilinear forms}\label{sect_radon_nikodym}
In this section we will extend to non necessarily positive sesquilinear forms  the notion of absolute continuity and study the possibility of getting a Radon-Nikodym-like theorem for them. But before doing this we need some preliminaries.

\begin{lemma}\label{lemma_oone} Let $\Psi$ dominate $\Theta$. Then there exists a bounded operator $C$ on $\H_{\sss{\Psi}}$, with $0\leq C\leq \gamma I$, such that
$$ \Theta(\xi,\eta)= \ip{Cj_{\sss{\Psi}}(\xi)}{j_{\sss{\Psi}}(\eta)}_\Psi, \quad \forall \xi, \eta \in \D.$$
\end{lemma}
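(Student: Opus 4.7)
The plan is to transport $\Theta$ through the quotient map $j_\Psi$, show the resulting form is bounded on $j_\Psi(\D)$, extend by continuity to the completion $\H_\Psi$, and then invoke the Riesz-type correspondence between bounded sesquilinear forms and bounded operators.

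First I would try to define a form $\widetilde\Theta$ on $j_\Psi(\D)\times j_\Psi(\D)$ by
\[
\widetilde\Theta\bigl(j_\Psi(\xi),j_\Psi(\eta)\bigr):=\Theta(\xi,\eta),\quad \xi,\eta\in\D,
\]
and verify that this is well defined. If $j_\Psi(\xi)=j_\Psi(\xi')$ then $\xi-\xi'\in{\sf N}(\Psi)$, i.e.\ $\Psi(\xi-\xi',\xi-\xi')=0$. From $\Theta\le\gamma\Psi$ we get $\Theta(\xi-\xi',\xi-\xi')=0$, hence $\xi-\xi'\in{\sf N}(\Theta)$ (using the characterization of ${\sf N}$ for positive forms given in the preliminaries). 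The Cauchy–Schwarz inequality for the positive form $\Theta$ then yields $|\Theta(\xi-\xi',\eta)|\le\Theta(\xi-\xi',\xi-\xi')^{1/2}\Theta(\eta,\eta)^{1/2}=0$ for every $\eta\in\D$; an identical argument handles the second variable. This is the main technical point, but it is a short chain of inequalities.

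Next, Cauchy–Schwarz applied to $\Theta$ combined with $\Theta\le\gamma\Psi$ gives
\[
|\widetilde\Theta(j_\Psi(\xi),j_\Psi(\eta))|\le\gamma\,\|j_\Psi(\xi)\|_\Psi\,\|j_\Psi(\eta)\|_\Psi,
\]
so $\widetilde\Theta$ is a bounded sesquilinear form on the pre–Hilbert space $j_\Psi(\D)$, with bound $\gamma$. Since $j_\Psi(\D)$ is dense in $\H_\Psi$, $\widetilde\Theta$ extends uniquely by continuity to a bounded sesquilinear form on $\H_\Psi\times\H_\Psi$ of the same bound. By the standard Riesz-type representation, there exists a bounded operator $C$ on $\H_\Psi$ with $\|C\|\le\gamma$ such that
\[
\widetilde\Theta(u,v)=\ip{Cu}{v}_\Psi,\quad u,v\in\H_\Psi,
\]
which in particular gives the desired formula on elements of the form $j_\Psi(\xi),j_\Psi(\eta)$.

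Finally I would read off the positivity and the norm bound. Since $\Theta$ is positive, $\ip{C j_\Psi(\xi)}{j_\Psi(\xi)}_\Psi=\Theta(\xi,\xi)\ge 0$ for every $\xi\in\D$, and density of $j_\Psi(\D)$ in $\H_\Psi$ together with continuity of the inner product extend this to $\ip{Cu}{u}_\Psi\ge 0$ on all of $\H_\Psi$, so $C\ge 0$ and in particular $C$ is self-adjoint. The estimate $\Theta(\xi,\xi)\le\gamma\Psi(\xi,\xi)=\gamma\|j_\Psi(\xi)\|_\Psi^{2}$ gives $\ip{Cu}{u}_\Psi\le\gamma\ip{u}{u}_\Psi$ first on the dense set and then everywhere by continuity, i.e.\ $C\le\gamma I$. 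The only delicate step, as noted above, is the well-definedness on the quotient, but once one uses Cauchy–Schwarz for the positive form $\Theta$ this is immediate and the rest is essentially bookkeeping.
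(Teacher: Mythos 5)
Your proof is correct, but it takes a different route from the paper's. The paper does not represent $\Theta$ as a bounded form on $\H_{\sss{\Psi}}$ and then invoke the Riesz correspondence; instead it defines the linear map $C_o: j_{\sss{\Psi}}(\xi) \mapsto j_{\sss{\Theta}}(\xi)$ from $j_{\sss{\Psi}}(\D)$ into the \emph{other} quotient space $j_{\sss{\Theta}}(\D)$, notes that domination makes $C_o$ well defined and bounded (with $\|C_o\|^2\leq\gamma$), extends it to an operator $\H_{\sss{\Psi}}\to\H_{\sss{\Theta}}$, and sets $C:=C_o^*C_o$; the identity $\ip{Cj_{\sss{\Psi}}(\xi)}{j_{\sss{\Psi}}(\eta)}_\Psi=\ip{j_{\sss{\Theta}}(\xi)}{j_{\sss{\Theta}}(\eta)}_{\sss{\Theta}}=\Theta(\xi,\eta)$ is then immediate. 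The factorization $C=C_o^*C_o$ buys positivity and self-adjointness of $C$ structurally, and the bound $0\leq C\leq\gamma I$ follows from $\|C_o\|^2\leq\gamma$ without any further density argument; it also produces the auxiliary operator $C_o$ (essentially a square root of $C$ intertwining the two GNS-type spaces), a factorization that is in the spirit of the constructions used later in Theorem 3.5. Your route, by contrast, stays entirely inside $\H_{\sss{\Psi}}$, at the cost of separately verifying well-definedness of the transported form, positivity, and the upper bound by density and continuity --- all of which you do correctly (the Cauchy--Schwarz argument for well-definedness on the quotient is exactly the right point to isolate). Both arguments are complete; the paper's is marginally more economical in the verification of $0\leq C\leq\gamma I$.
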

\begin{proof}
By \eqref{eq_oone} it follows easily that $C_o:j_{\sss{\Psi}}(\xi) \to j_{\sss{\Theta}}(\xi)$ is a well defined linear operator from $j_{\sss{\Psi}}(\D)$ into $j_{\sss{\Theta}}(\D)$ and, also, that it is bounded. Thus, it extends to a bounded operator (denoted by the same symbol) from $\H_{\sss{\Psi}}$ into $\H_{\sss{\Theta}}$. We put $C:= C_o^*C_o$. Then $C$ is a bounded operator on $\H_{\sss{\Psi}}$ with $0\leq C\leq \gamma I$.
One has,
$$ \ip{Cj_{\sss{\Psi}}(\xi)}{j_{\sss{\Psi}}(\eta)}_\Psi = \ip{C_oj_{\sss{\Psi}}(\xi)}{C_oj_{\sss{\Psi}}(\eta)}_{\sss{\Theta}}= \ip{j_{\sss{\Theta}}(\xi)}{j_{\sss{\Theta}}(\eta)}_{\sss{\Theta}}=\Theta(\xi,\eta).$$
\end{proof}

Let $\Omega$ be a sesquilinear form on $\D\times \D$. We denote by $\M(\Omega)$ the set of all positive sesquilinear forms $\Psi$ on $\D\times \D$ such that
\begin{equation}\label{eq:domination} |\Omega(\xi,\eta)|\leq \Psi (\xi,\xi)^{1/2} \Psi (\eta,\eta)^{1/2}, \quad \forall \xi, \eta \in \D.\end{equation}

{
%
\berem It is worth remarking that if \begin{equation} \label{eqn_Mom}|\Omega(\xi,\xi)|\leq \Psi(\xi,\xi), \quad \forall \xi\in \D\end{equation} then
$$|\Omega(\xi,\eta)| \leq \epsilon_\Omega \Psi(\xi,\xi)^{1/2} \Psi(\eta,\eta)^{1/2}, \quad \forall \xi,\eta \in D,$$
where either $\epsilon_\Omega=1$, if $\Omega$ is symmetric, or $\epsilon_\Omega=2$, if $\Omega$ is not symmetric (see, e.g. \cite[Ch. VI, $\S$ 1.2, $\S$ 3.1]{kato}).
Hence, $\M(\Omega)$ can also be described as the set of all positive forms $\Psi$ for which the inequality \eqref{eqn_Mom} holds.
\enrem
}
\bedefi \label{defn_abs_cont} Let  $\Theta$  be a positive sesquilinear form on $\D\times \D$. A positive sesquilinear form $\Psi$ is said to be {\em $\Theta$-absolutely continuous} if

(i) ${\sf N}(\Theta) \subseteq {\sf N}(\Psi)$;

(ii) The map $j_{\sss{\Theta}} (\xi) \to j_{\sss{\Psi}}(\xi)$, $\xi\in \D$  is a closable linear map of the pre-Hilbert space $j_{\sss{\Theta}}(\D)$ into the Hilbert space $\H_{\sss{\Psi}}$.

A (non necessarily positive) sesquilinear form $\Omega$ is said to be {\em $\Theta$-regular} if there exists $\Psi \in \M(\Omega)$ such that $\Psi$ is $\Theta$-absolutely continuous. \findefi
\berem If $\Psi$ is a positive sesquilinear form, then $\Psi \in \M(\Psi)$ and if $\Psi$ is $\Theta$-absolutely continuous, then it is $\Theta$-regular. We point out that (i) can be deduced from (ii), but since it constitutes a preliminary test for the $\Theta$-absolute continuity of a positive form $\Psi$, we prefer to keep them separate.\enrem

{It is easily seen that the sum of $\Theta$-regular forms is $\Theta$-regular and so is the positive scalar multiple of a $\Theta$-regular form.}

\beex \label{example_35}
Let $T$ be a closed operator defined on a dense domain $D(T)$ of a Hilbert space $\H$. Assume that $\D:=D(T)\cap D(T^*)$ is dense in $\H$. We define a sesquilinear form $\Omega_T$ on $\D \times \D$ by putting
$$ \Omega_T(\xi, \eta)= \ip{T\xi}{\eta}, \; \xi, \eta \in \D$$
Let $T=UH$ be the polar decomposition of $T$ with $H=(T^*T)^{1/2}$.
Let us define a positive sesquilinear form $\Gamma_T$ on $\D\times\D$ by
$$\Gamma_T(\xi, \eta)=\ip{\xi}{\eta}+\ip{H\xi}{\eta}+ \ip{HU^*\xi}{U^*\eta}, \quad \xi, \eta \in \D_T.$$
Then $\Gamma_T \in \M(\Omega_T)$. Indeed, by the generalized Cauchy-Schwarz inequality,  we have, for every $\xi, \eta \in \D$,
\begin{align*}
|\Omega_T(\xi, \eta)|&= |\ip{T\xi}{\eta}|= |\ip{UH\xi}{\eta}|= |\ip{H\xi}{U^*\eta}|\\
&\leq \ip{H\xi}{\xi}^{1/2} \ip{HU^*\eta}{U^*\eta}^{1/2} \leq \Gamma_T(\xi, \xi)^{1/2} \Gamma_T(\eta, \eta)^{1/2}  .\end{align*}
Now we show that $\Gamma_T$ is $\Theta$-absolutely continuous where $\Theta$ is the inner product of $\H$. Indeed, let $\{\xi_n\}$ be a sequence in $\D$ such that $\|\xi_n\|\to 0$ and $\Gamma_T(\xi_n- \xi_m,\xi_n- \xi_m)\to 0$.
This implies that
\begin{align*}
\Gamma_T(\xi_n- &\xi_m,\xi_n- \xi_m) = \|\xi_n - \xi_m\|^2+\ip{H(\xi_n- \xi_m)}{\xi_n- \xi_m}\\&+ \ip{HU^*(\xi_n- \xi_m)}{U^*(\xi_n- \xi_m)}\\
&= \|\xi_n - \xi_m\|^2+\|{H^{1/2}(\xi_n- \xi_m)}\|^2+ \|{H^{1/2}U^*(\xi_n- \xi_m)}\|^2\to 0.
\end{align*}
Since $H^{1/2}$ is closable in $\D_T$, we get  $\Gamma_T(\xi_n, \xi_n)\to 0$.
Hence $\Omega_T$ is $\Theta$-regular, in the sense of Definition \ref{defn_abs_cont}. \enex

As it is clear, this example gives a strong indication on conditions that a sesquilinear form $\Omega$ must satisfy to be represented as $\Omega=\Omega_T$. In fact, we can now state the following Radon-Nikodym-like theorem.

\begin{thm}\label{thm_positivedom}
Let $\Omega$, $\Theta$ be sesquilinear forms on $\D\times \D$, with $\Theta$ positive.
The following statements are equivalent.
\begin{itemize}
\item[(i)] $\Omega$ is $\Theta$-regular.
\item[(ii)] There exists a positive self-adjoint operator $H$, with $j_{\sss{\Theta}}(\D) \subset D(H)$, and a linear operator $Y:j_{\sss{\Theta}}(\D)\to D(H)$ such that
\begin{equation}\label{eqn_fundamental} \Omega(\xi,\eta)= \ip{HY j_{\sss{\Theta}}(\xi)}{H j_{\sss{\Theta}}(\eta)}_{\sss{\Theta}} , \quad \forall \xi, \eta \in \D\end{equation}
and the positive sesquilinear form $\Gamma$ defined by
$$ \Gamma(\xi,\eta)=\ip{Hj_{\sss{\Theta}}(\xi)}{Hj_{\sss{\Theta}}(\eta)}_{\sss{\Theta}}+\ip{HYj_{\sss{\Theta}}(\xi)}{HYj_{\sss{\Theta}}(\eta)}_{\sss{\Theta}}, \quad  \xi, \eta \in \D$$
is $\Theta$-absolutely continuous.
\end{itemize}
\end{thm}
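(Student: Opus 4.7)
\emph{Proof plan.} The direction (ii) $\Rightarrow$ (i) is immediate: take $\Psi := \Gamma$. The Cauchy--Schwarz inequality in $\H_{\sss{\Theta}}$ applied to \eqref{eqn_fundamental} gives
\[ |\Omega(\xi,\eta)| \leq \|HYj_{\sss{\Theta}}(\xi)\|_{\sss{\Theta}}\,\|Hj_{\sss{\Theta}}(\eta)\|_{\sss{\Theta}} \leq \Gamma(\xi,\xi)^{1/2}\Gamma(\eta,\eta)^{1/2}, \]
so $\Gamma \in \M(\Omega)$, and $\Gamma$ is $\Theta$-absolutely continuous by hypothesis.

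For (i) $\Rightarrow$ (ii), start from $\Psi \in \M(\Omega)$ which is $\Theta$-absolutely continuous and proceed in three steps. \emph{Step 1 (the operator $H$).} Closability of the map $V: j_{\sss{\Theta}}(\xi) \mapsto j_{\sss{\Psi}}(\xi)$ together with the polar decomposition $\overline{V} = W_0 H_0$ produces a positive self-adjoint operator $H_0 = |\overline{V}|$ on $\H_{\sss{\Theta}}$ with $\Psi(\xi,\eta) = \ip{H_0 j_{\sss{\Theta}}(\xi)}{H_0 j_{\sss{\Theta}}(\eta)}_{\sss{\Theta}}$. To obtain bounded invertibility I would replace $\Psi$ by $\Psi' := \Psi + \Theta \in \M(\Omega)$, still $\Theta$-absolutely continuous (since sums of such forms, and $\Theta$ itself, are so), and take $H := (I + H_0^2)^{1/2} \geq I$. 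Then $j_{\sss{\Theta}}(\D) \subset D(H) = D(H_0)$, $H^{-1}$ is bounded on $\H_{\sss{\Theta}}$ with range in $D(H)$, $\Psi'(\xi,\eta) = \ip{Hj_{\sss{\Theta}}(\xi)}{Hj_{\sss{\Theta}}(\eta)}_{\sss{\Theta}}$, and the injectivity of $H$ makes the polar factorization of $V': j_{\sss{\Theta}}(\xi) \mapsto j_{\sss{\Psi'}}(\xi)$ take the form $\overline{V'} = WH$ with $W: \H_{\sss{\Theta}} \to \H_{\sss{\Psi'}}$ unitary.

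\emph{Step 2 (the operator $Y$).} Since $|\Omega(\xi,\eta)| \leq \|j_{\sss{\Psi'}}(\xi)\|_{\sss{\Psi'}}\|j_{\sss{\Psi'}}(\eta)\|_{\sss{\Psi'}}$, the form $\Omega$ passes to the quotient on $\H_{\sss{\Psi'}}$ and the Riesz representation theorem yields a bounded operator $A$ on $\H_{\sss{\Psi'}}$ with $\|A\|\leq 1$ and $\Omega(\xi,\eta) = \ip{A j_{\sss{\Psi'}}(\xi)}{j_{\sss{\Psi'}}(\eta)}_{\sss{\Psi'}}$. Substituting $j_{\sss{\Psi'}}(\xi) = WHj_{\sss{\Theta}}(\xi)$ and moving $W$ across the inner product gives
\[ \Omega(\xi,\eta) = \ip{BHj_{\sss{\Theta}}(\xi)}{Hj_{\sss{\Theta}}(\eta)}_{\sss{\Theta}}, \qquad B := W^*AW, \]
with $B$ bounded on $\H_{\sss{\Theta}}$. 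Setting $Yj_{\sss{\Theta}}(\xi) := H^{-1}BHj_{\sss{\Theta}}(\xi)$, which lies in $D(H)$ by bounded invertibility of $H$, one obtains $HYj_{\sss{\Theta}}(\xi) = BHj_{\sss{\Theta}}(\xi)$, so \eqref{eqn_fundamental} holds.

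\emph{Step 3 ($\Theta$-absolute continuity of $\Gamma$).} This is the step I expect to require the most delicate analysis. Write $\Gamma = \Psi' + \Gamma_2$ with $\Gamma_2(\xi,\eta) := \ip{BHj_{\sss{\Theta}}(\xi)}{BHj_{\sss{\Theta}}(\eta)}_{\sss{\Theta}}$; the first summand is $\Theta$-absolutely continuous by construction, and $\Gamma_2(\xi,\xi) \leq \|B\|^2\Psi'(\xi,\xi)$. Under the isometric identification $\H_{\sss{\Gamma_2}} \cong \overline{BHj_{\sss{\Theta}}(\D)} \subset \H_{\sss{\Theta}}$, the embedding map $j_{\sss{\Theta}}(\xi) \mapsto j_{\sss{\Gamma_2}}(\xi)$ becomes the restriction of $BH$ to $j_{\sss{\Theta}}(\D)$; closability of this restriction is the main technical point --- since the product of a bounded and a closed operator need not be closable in general, I would exploit the specific structure $B = W^*AW$ with $W$ unitary and $H$ self-adjoint, identifying $(BH)^*\supset HB^* = HW^*A^*W$ on a suitably dense domain determined by the polar factorization from Step~1. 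Finally, closability of the sum map $j_{\sss{\Theta}}(\xi) \mapsto j_{\sss{\Gamma}}(\xi)$ follows from closability of the two summand maps via the isometric embedding $\H_{\sss{\Gamma}} \hookrightarrow \H_{\sss{\Psi'}} \oplus \H_{\sss{\Gamma_2}}$, completing the proof.
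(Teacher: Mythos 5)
Your construction in Steps 1--2 is correct and reaches exactly the same objects as the paper by a genuinely different (and arguably cleaner) route: the paper does not use the polar decomposition of the closure of $j_{\sss{\Theta}}(\xi)\mapsto j_{\sss{\Psi}}(\xi)$, but instead embeds everything into $\H_{\sss{\Theta+\Psi}}$ (your $\H_{\sss{\Psi'}}$), writes $\Theta=\ip{B\,\cdot}{B\,\cdot}_{\sss{\Theta+\Psi}}$ via Lemma \ref{lemma_oone}, and builds $K=\lim K_n$ with $K_n=U^*S_nU$ through a spectral integral of $B$, using the absolute continuity of $\Psi$ to kill the contribution of $\ker B$; then $H=(I+K^2)^{1/2}$ and $Y=\emb\circ Y_{\sss{\Theta+\Psi}}\circ j_{\sss{\Theta+\Psi}}$, which coincides with your $H^{-1}BH$. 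What the paper's heavier construction buys is the explicit nondecreasing sequence $\Psi_n=\ip{K_n\cdot}{K_n\cdot}_{\sss{\Theta}}$, which is reused immediately afterwards to prove that $\Theta$-absolute continuity is equivalent to being almost dominated by $\Theta$; your route does not produce that sequence, but for the theorem itself it is entirely adequate.

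Step 3, however, is misdirected. Closability of the restriction of $BH$ to $j_{\sss{\Theta}}(\D)$ (equivalently, of the map $j_{\sss{\Theta}}(\xi)\mapsto j_{\sss{\Gamma_2}}(\xi)$ taken \emph{alone}) is not something your sketch establishes and there is no reason for it to hold: $B=W^*AW$ is just an arbitrary bounded operator coming from the Riesz representation of $\Omega$, so the domain $\{x:\,B^*x\in D(H)\}$ on which $(BH)^*\supset HB^*$ would act need not be dense, and from $j_{\sss{\Theta}}(\xi_n)\to 0$ together with $BHj_{\sss{\Theta}}(\xi_n-\xi_m)\to 0$ one cannot infer anything about $Hj_{\sss{\Theta}}(\xi_n)$ when $B$ is not bounded below. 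The point is that this separate closability is not needed: you should argue directly on the sum, using the inequality $\Gamma_2\leq\|B\|^2\Psi'$ that you already wrote down. If $\|j_{\sss{\Theta}}(\xi_n)\|_{\sss{\Theta}}\to 0$ and $\|Hj_{\sss{\Theta}}(\xi_n-\xi_m)\|_{\sss{\Theta}}^2+\|BHj_{\sss{\Theta}}(\xi_n-\xi_m)\|_{\sss{\Theta}}^2\to 0$, then $Hj_{\sss{\Theta}}(\xi_n)$ converges to some $w$; since $H^{-1}$ is bounded and injective, $j_{\sss{\Theta}}(\xi_n)=H^{-1}Hj_{\sss{\Theta}}(\xi_n)\to H^{-1}w$, forcing $w=0$; boundedness of $B$ then gives $BHj_{\sss{\Theta}}(\xi_n)\to 0$, hence $\Gamma(\xi_n,\xi_n)\to 0$. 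This is exactly the paper's argument (its inequality \eqref{eqn_eleven} plays the role of the bounded invertibility of your $H$). With Step 3 replaced by these two lines your proof is complete.
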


\begin{proof}
(i)$\Rightarrow$(ii):\;Let $\Psi\in M(\Omega)$ satisfy the conditions (i) and (ii) of Definition \ref{defn_abs_cont}.\; Clearly, $\Theta+ \Psi$ dominates both $\Theta$ and $\Psi$. Then by Lemma \ref{lemma_oone} there exists a positive self-adjoint operator $C\in{\mc B}(\H_{\sss{\Theta + \Psi}})$, $0\leq C\leq I$, such that
$$ \Theta(\xi,\eta)= \ip{Cj_{\sss{\Theta + \Psi}}(\xi)}{j_{\sss{\Theta + \Psi}}(\eta)}_{\sss{\Theta + \Psi}}, \quad \forall \xi, \eta \in \D.$$
Put $B:= C^{1/2}$. Then, $0\leq B\leq I$ and
\begin{equation}
\Theta(\xi,\eta) = \ip{B j_{\sss{\Theta + \Psi}}(\xi)}{ B j_{\sss{\Theta + \Psi}}(\eta)}_{\sss{\Theta + \Psi}},\;\forall \xi,\eta \in \D,
\label{eq:9.4}
\end{equation}
Since $(\Theta+\Psi)(\xi,\eta)= \ip{j_{\sss{\Theta + \Psi}}(\xi)}{j_{\sss{\Theta + \Psi}}(\eta)}_{\sss{\Theta + \Psi}}$, we also have
$$ \Psi(\xi, \eta)=(\Theta+\Psi)(\xi, \eta)- \Theta(\xi, \eta)=\ip{(I-B^2)j_{\sss{\Theta + \Psi}}(\xi)}{j_{\sss{\Theta + \Psi}}(\eta)}_{\sss{\Theta + \Psi}}.$$
By \eqref{eq:9.4}  an isometry $U$ of $\H_{\sss{\Theta}}$ into $\H_{\sss{\Theta + \Psi}}$ can be defined by putting first
$$
U j_{\sss{\Theta}}(\xi)= B j_{\sss{\Theta + \Psi}}(\xi), \; \xi \in \D
$$
and then extending it to $\H_{\sss{\Theta}}$.

Set  now
\begin{align*}
& S_n = \int^1_{\frac{1}{n}} t^{-1} \sqrt{1-t^2} \, dE(t), \\
& K_n = U\ha   S_n U, \; n \in \NN,
\end{align*}
where $B= \int^1_0 t \, dE(t)$ is the spectral resolution of $B$. Then it is not difficult to show that $\{ K_n \}$
is an increasing sequence of positive operators. We have
\begin{equation}
\lim_{n \rightarrow \infty} K_n j_{\sss{\Theta}}(\xi) \text{ exists in } \H_{\sss{\Theta}},   \; \forall \, \xi \in \D
\label{eq:9.5}
\end{equation}
and
\begin{equation}
\Psi(\xi,\eta) = \lim_{n \to \infty} \ip{K_n j_{\sss{\Theta}}(\xi)}{ K_n j_{\sss{\Theta}}(\eta) }_{\sss{\Theta}},   \; \forall \, \xi,\eta
\in \D.
\label{eq:9.6}
\end{equation}
Indeed, taking into account that since $UU\ha$ is the projection onto the range $\ran U$ of $U$ and that $\ran U=\overline{\ran B}=(\ker B)^\perp $, then $UU\ha$ commutes with $B$ and with $S_n$, we have  for $m>n$,
\begin{align*} \|K_n j_{\sss{\Theta}}(\xi) - K_m j_{\sss{\Theta}}(\xi)\|_{\sss{\Theta}}^2 &=\|U\ha S_n U j_{\sss{\Theta}}(\xi) - U\ha S_m Uj_{\sss{\Theta}}(\xi)\|_{\sss{\Theta}}^2\\
&=\|UU\ha S_n U j_{\sss{\Theta}}(\xi) - UU\ha S_m Uj_{\sss{\Theta}}(\xi)\|_{\sss{\Theta+\Psi}}^2\\
&=\|S_nBj_{\sss{\Theta + \Psi}}(\xi) - S_mBj_{\sss{\Theta + \Psi}}(\xi)\|_{\sss{\Theta + \Psi}}^2\\
&= \int_{1/m}^{1/n} {(1-t^2)} \, d\ip{E(t) j_{\sss{\Theta + \Psi}}(\xi)}{j_{\sss{\Theta + \Psi}}(\xi)}_{\sss{\Theta + \Psi}}\\
&\leq \|(E(1/n)-E(1/m)) j_{\sss{\Theta + \Psi}}(\xi))\|_{\sss{\Theta + \Psi}}^2\to 0, \;\mbox{as}\;n,m\to \infty.
\end{align*}

Now we prove \eqref{eq:9.6}. We denote by $P_{\sss{\Theta + \Psi}}$ the projection of $\H_{\sss{\Theta + \Psi}}$ onto $\ker B$.  Then, for every $\xi, \eta \in \D$, we get
\begin{align}\label{eqn_definingseq}
\lim_{n \to \infty} &\ip{K_n j_{\sss{\Theta}}(\xi)}{ K_n j_{\sss{\Theta}}(\eta)}_{\sss{\Theta}} = \lim_{n\to\infty}\ip{U\ha S_nUj_{\sss{\Theta}}(\xi)}{U\ha S_nUj_{\sss{\Theta}}(\eta)}_{\sss{\Theta}}\\
&=\lim_{n\to\infty}\ip{UU\ha S_nUj_{\sss{\Theta}}(\xi)}{S_nUj_{\sss{\Theta}}(\eta)}_{\sss{\Theta + \Psi}}\nonumber\\
&=\lim_{n\to\infty}\ip{S_nUj_{\sss{\Theta}}(\xi)}{S_nUj_{\sss{\Theta}}(\eta)}_{\sss{\Theta + \Psi}}\nonumber\\
&=\lim_{n\to\infty} \ip{S_nBj_{\sss{\Theta + \Psi}}(\xi)}{S_nBj_{\sss{\Theta + \Psi}}(\eta)}_{\sss{\Theta + \Psi}}\nonumber\\
&=\lim_{n\to\infty}\int_{1/n}^1 (1-t^2)d\ip{E(t) j_{\sss{\Theta + \Psi}}(\xi)}{j_{\sss{\Theta + \Psi}}(\eta)}_{\sss{\Theta + \Psi}}\nonumber\\
&= \ip{(I-B^2)j_{\sss{\Theta + \Psi}}(\xi)}{j_{\sss{\Theta + \Psi}}(\eta)}_{\sss{\Theta + \Psi}}- \ip{P_{\sss{\Theta + \Psi}} j_{\sss{\Theta + \Psi}}(\xi)}{j_{\sss{\Theta + \Psi}}(\eta)}_{\sss{\Theta + \Psi}}\nonumber \\
&=\Psi(\xi,\eta)-\ip{P_{\sss{\Theta + \Psi}} j_{\sss{\Theta + \Psi}}(\xi)}{j_{\sss{\Theta + \Psi}}(\eta)}_{\sss{\Theta + \Psi}}\nonumber .
\end{align}
Now we use the $\Theta$-absolute continuity of $\Psi$. Let $\xi \in \D$ and consider $P_{\sss{\Theta + \Psi}} j_{\sss{\Theta + \Psi}}(\xi)$. Let $\{\xi_n\}$ be a sequence of vectors of $\D$ such that $j_{\sss{\Theta + \Psi}}(\xi_n) \to P_{\sss{\Theta + \Psi}}j_{\sss{\Theta + \Psi}}(\xi)$. Then we have
$$ \lim_{n\to\infty} U j_{\sss{\Theta}}(\xi_n) = \lim_{n\to\infty} B j_{\sss{\Theta + \Psi}}(\xi_n)= BP_{\sss{\Theta + \Psi}}j_{\sss{\Theta + \Psi}}(\xi)=0,$$
so $\|j_{\sss{\Theta}}(\xi_n)\|_{\sss{\Theta}}\to 0$
and
$$\|j_{\sss{\Psi}}(\xi_n)-j_{\sss{\Psi}}(\xi_m)\|^2_{\sss{\Psi}}= \ip{(I-B^2)j_{\sss{\Theta + \Psi}}(\xi_n - \xi_m)}{j_{\sss{\Theta + \Psi}}(\xi_n - \xi_m)}_{\sss{\Theta + \Psi}}\to 0\; $$
as $n,m\to + \infty$.
By the closability of the map $j_{\sss{\Theta}} (\xi) \to j_{\sss{\Psi}}(\xi)$, $\xi\in \D$ it follows that $\|j_{\sss{\Psi}}(\xi_n)\|_{\sss{\Psi}}\to 0$.
Hence,
\begin{align*} \ip{P_{\sss{\Theta + \Psi}} j_{\sss{\Theta + \Psi}}(\xi)}{j_{\sss{\Theta + \Psi}}(\xi)}_{\sss{\Theta + \Psi}}&= \ip{(I - B^2)P_{\Theta + \Psi} j_{\sss{\Theta + \Psi}}(\xi)}{j_{\sss{\Theta + \Psi}}(\xi)}_{\sss{\Theta + \Psi}}\\&=\lim_{n\to\infty}\ip{(I-B^2)j_{\sss{\Theta + \Psi}}(\xi_n)}{j_{\sss{\Theta + \Psi}}(\xi_n)}_{\sss{\Theta + \Psi}} \\ &= \lim_{n\to\infty}\ip{j_{\sss{\Psi}} (\xi_n)}{ j_{\sss{\Psi}} (\xi_n)}_{\sss{\Psi}} =0.
\end{align*}
We put
$$
\begin{cases}
D(K_0) = \{ \zeta\in \H_{\sss{\Theta}}; \dis \lim_{n \rightarrow \infty} K_n \zeta  \text{ exists in } \H_{\sss{\Theta}} \}, \\
K_0 \zeta = \dis \lim_{n \rightarrow \infty} K_n \zeta, \quad \xi \in \D(K_0).
\end{cases}
$$
Then it follows from \eqref{eq:9.5} and \eqref{eq:9.6} that $K_0$ is a positive operator in $\H_{\sss{\Theta}}$ such that
 $D(K_0) \supset j_{\sss{\Theta}}(\D)$. Its Friedrichs extension $K$ is then a positive self-adjoint operator
satisfying the following conditions ({\sf k}$_1$) and ({\sf k}$_2$):
\begin{itemize}
\item[({\sf k}$_1$)]
$j_{\sss{\Theta}}(\D) \subset D(K)$;

\item[({\sf k}$_2$)] $\Psi(\xi,\eta)= \ip{K j_{\sss{\Theta}}(\xi)}{K j_{\sss{\Theta}}(\eta)}_{\sss{\Theta}}, \; \forall\xi,\eta \in \D$.
\end{itemize}

The $\Theta$-absolute continuity of $\Psi$ implies that the map
\begin{equation}\label{eqn_map_u} \emb: \tilde{\xi}=\lim_{n\to\infty} j_{\sss{\Theta +\Psi}}(\xi_n)\in \H_{\sss{\Theta+\Psi}}\mapsto \emb(\tilde{\xi}):=\lim_{n\to\infty} j_{\sss{\Theta}}(\xi_n)\in \H_{\sss{\Theta}}\end{equation}
is well-defined, injective, continuous and has dense range. Thus
$\H_{\sss{\Theta+\Psi}}$ can be identified with a dense subspace of $\H_{\sss{\Theta}}$.

As for the inner product we have
$$\ip{\tilde{\xi}}{\tilde{\eta}}_{\sss{\Theta+\Psi}}=\ip{\emb(\tilde{\xi})}{\emb(\tilde{\eta})}_{\sss{\Theta}}+\ip{K\emb(\tilde{\xi})}{K\emb(\tilde{\eta})}_{\sss{\Theta}}, \quad \forall \tilde{\xi}, \tilde{\eta}\in \H_{\sss{\Theta+\Psi}}. $$
Now define $H=(I+K^2)^{1/2}$. Then $D(H)=D(K)$ and
\begin{equation}\label{eqn_secondkato}\ip{\tilde{\xi}}{\tilde{\eta}}_{\sss{\Theta+\Psi}}=\ip{H\emb(\tilde{\xi})}{H\emb(\tilde{\eta})}_{\sss{\Theta}}, \quad \forall \tilde{\xi}, \tilde{\eta}\in \H_{\sss{\Theta+\Psi}}. \end{equation}
 We set
$$ \Omega_0(j_{\sss{\Theta+\Psi}}(\xi), j_{\sss{\Theta+\Psi}}(\eta)):= \Omega(\xi,\eta), \quad \xi, \eta \in \D.$$
Then $\Omega_0$ is a well-defined sesquilinear form on $j_{\sss{\Theta+\Psi}}(\D)\times j_{\sss{\Theta+\Psi}}(\D)$.

Then by \eqref{eq:domination} it follows that $\Omega_0$ is bounded on $j_{\sss{\Theta+\Psi}}(\D)$ and thus it extends to a bounded sesquilinear form $\tilde\Omega_0$ on $\H_{\sss{\Theta+\Psi}}$. Hence, there exists an operator $Y_{\sss{\Theta+\Psi}} $, bounded on $\H_{\sss{\Theta+\Psi}}$, such that
$$ \Omega(\xi,\eta)=\tilde\Omega_0(j_{\sss{\Theta+\Psi}}(\xi), j_{\sss{\Theta+\Psi}}(\eta))= \ip{Y_{\sss{\Theta+\Psi}} j_{\sss{\Theta+\Psi}}(\xi)}{j_{\sss{\Theta+\Psi}}(\eta)}_{\sss{\Theta+\Psi}}, \quad \forall \xi, \eta \in \D .$$
On the other hand, by \eqref{eqn_secondkato}, we have
\begin{align}\label{eqn_ten}\ip{Y_{\sss{\Theta+\Psi}} j_{\sss{\Theta+\Psi}}(\xi)}{j_{\sss{\Theta+\Psi}}(\eta)}_{\sss{\Theta+\Psi}}&=\ip{H \emb(Y_{\sss{\Theta+\Psi}} j_{\sss{\Theta+\Psi}}(\xi))}{H\emb(j_{\sss{\Theta+\Psi}}(\eta))}_{\sss{\Theta}}\\ \nonumber
&=\ip{H \emb(Y_{\sss{\Theta+\Psi}} j_{\sss{\Theta+\Psi}}(\xi))}{H j_{\sss{\Theta}}(\eta)}_{\sss{\Theta}} , \quad \forall \xi, \eta \in \D.\end{align} Hence,
\begin{equation}\label{eqn_10} \Omega(\xi, \eta) = \ip{H \emb(Y_{\sss{\Theta+\Psi}} j_{\sss{\Theta+\Psi}}(\xi))}{H j_{\sss{\Theta}}(\eta)}_{\sss{\Theta}} , \quad \forall \xi, \eta \in \D.\nonumber\end{equation}
We now define an operator $Y$ from $j_{\sss\Theta}(\D)$ into $\H_{\sss\Theta}$ by putting $Y j_{\sss{\Theta}}(\xi)= \emb(Y_{\sss{\Theta+\Psi}} j_{\sss{\Theta+\Psi}}(\xi))$. The $\Theta$-absolute continuity of $\Psi$ implies that $Y$ is well-defined. Thus finally we have
\begin{equation}\label{eqn_final_abscont} \Omega(\xi,\eta)= \ip{H Yj_{\sss{\Theta}}(\xi))}{H j_{\sss{\Theta}}(\eta)}_{\sss{\Theta}} , \quad \forall \xi, \eta \in \D.\end{equation}
We now prove that $\Gamma$ is $\Theta$-absolutely continuous.
We first observe that using \eqref{eqn_secondkato},  the following inequality can be proven:
\begin{align}\label{eqn_eleven} \|Yj_{\sss{\Theta}}(\xi)\|_{\sss{\Theta}}&= \|\emb (Y_{\sss{\Theta+\Psi}} j_{\sss{\Theta+\Psi}}(\xi)\|_{\sss{\Theta}}\\ & \leq \gamma\|Y_{\sss{\Theta+\Psi}} j_{\sss{\Theta+\Psi}}(\xi)\|_{\sss{\Theta+\Psi}}= \gamma \|HYj_{\sss{\Theta}}(\xi)\|_{\sss{\Theta}}, \quad \forall \xi \in \D.\nonumber\end{align}
 Let $\{\xi_n\}$ be a sequence in $\D$ such that \begin{align*} &\|j_{\sss{\Theta}}(\xi_n)\|_{\sss{\Theta}}\to 0, , \mbox{ as } n \to \infty, \mbox{ and } \\ & \|Hj_{\sss{\Theta}}(\xi_n-\xi_m)\|_{\sss{\Theta}}^2+\|HYj_{\sss{\Theta}}(\xi_n-\xi_m)\|_{\sss{\Theta}}^2\to 0, \mbox{ as } n,m \to \infty. \end{align*}
The closedness of $H$ implies that $\|Hj_{\sss{\Theta}}(\xi_n)\|_{\sss{\Theta}}\to 0$. But, by  \eqref{eqn_secondkato}, $\|Hj_{\sss{\Theta}}(\xi_n)\|_{\sss{\Theta}}=\|j_{\sss{\Theta+\Psi}}(\xi_n)\|_{\sss{\Theta+\Psi}}\to 0$ and thus, by the boundedness of $Y_{\sss{\Theta+\Psi}}$ in $\H_{\sss{\Theta+\Psi}} $ we conclude that $\|Y_{\sss{\Theta+\Psi}}j_{\sss{\Theta+\Psi}}(\xi_n)\|_{\sss{\Theta+\Psi}}\to 0$.
On the other hand, since
$$ \|Yj_{\sss{\Theta}}(\xi_n-\xi_m)\|_{\sss{\Theta}}\leq \gamma \|HYj_{\sss{\Theta}}(\xi_n-\xi_m)\|_{\sss{\Theta}}\to 0,$$
the sequence $\{Yj_{\sss{\Theta}}(\xi_n) \}$ converges to some vector $\zeta\in \H_{\sss{\Theta}}$ and, again by the closedness of $H$, we have $\zeta \in D(H)$ and $HYj_{\sss{\Theta}}(\xi_n)\to H\zeta$. By \eqref{eqn_eleven}, we obtain $\zeta=0$. Using again the closedness of $H$ we finally get $\|HYj_{\sss{\Theta}}(\xi_n)\|_{\sss{\Theta}}\to 0$. Hence $\Gamma$ is $\Theta$-absolutely continuous.
The implication (ii)$\Rightarrow$(i) is obvious once one takes $\Psi=\Gamma$.
\end{proof}

{  \berem The operators $H, Y$ which appear in the representation \eqref{eqn_fundamental} of $\Omega$ depend, clearly, on the choice of the $\Theta$-absolutely continuous form $\Psi\in \M(\Omega)$ (if any, of course). They are not even uniquely determined for a fixed $\Psi \in \M(\Omega)$, but $Y$, depending only on the inner product defined by $\Psi$, is unique. It is worth remarking that following \cite[Ch. VI, Lemma 3.1]{kato} and under the same assumptions of Theorem \ref{thm_positivedom}, it can be shown that there exists a bounded operator $S$ in $\H_{\sss \Theta}$ such that
$$ \Omega(\xi,\eta)= \ip{SH j_{\sss{\Theta}}(\xi)}{H j_{\sss{\Theta}}(\eta)}_{\sss{\Theta}} , \quad \forall \xi, \eta \in \D.$$
The values of the operator $S$ can be, however, arbitrarily chosen on the subspace $(\ran H)^\perp$ of $\H_{\sss{\Theta}}$; so that, even for fixed $\Psi$ and $H$, the uniqueness of the representation is lost.
\enrem
}
\medskip
In \cite{sebestyen} Sebesty\'en and Titkos gave a Radon-Nikodym type theorem for positive sesquilinear form $\Psi$ on $\D\times \D$, with $\Psi$ {\em almost dominated} by $\Theta$; this means that there exists a nondecreasing sequence ${\Psi_n}$ such that $\Theta$ dominates every $\Psi_n$, $n\in {\mb N}$, and $\Psi= \sup_{n\in {\mb N}}\Psi_n$. Along the previous proof we have explicitly constructed this sequence. Actually, the following statement holds (see also \cite[Theorem 3.8]{hassi}).

\begin{cor} Let $\Theta$ and $\Psi$ be positive sesquilinear forms on $\D\times \D$. The following statements are equivalent.
\begin{itemize}
\item[(i)] $\Psi$ is $\Theta$-absolutely continuous.
\item[(ii)] $\Psi$ is almost dominated by $\Theta.$
\end{itemize}
\end{cor}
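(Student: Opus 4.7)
The plan is to use Theorem \ref{thm_positivedom} (with $\Omega=\Psi$, which is legitimate by Cauchy--Schwarz, so that $\Psi\in\M(\Psi)$) in one direction and to give a direct $\varepsilon$-argument in the other.

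For (i)$\Rightarrow$(ii), I would recycle the construction from the proof of Theorem \ref{thm_positivedom} applied to $\Omega=\Psi$. Set
$$ \Psi_n(\xi,\eta):=\ip{K_n j_{\sss{\Theta}}(\xi)}{K_n j_{\sss{\Theta}}(\eta)}_{\sss{\Theta}}, \quad \xi,\eta\in\D. $$
Since $K_n$ is bounded on $\H_{\sss{\Theta}}$, the estimate $\Psi_n(\xi,\xi)\le\|K_n\|^2\,\Theta(\xi,\xi)$ shows that $\Theta$ dominates each $\Psi_n$. Using that $UU\ad$ commutes with each $S_n$ (noted in the proof of Theorem \ref{thm_positivedom}) together with the isometry relation $U\ad U=I$ on $\H_{\sss{\Theta}}$, one computes $K_n^2=U\ad S_n^2 U$; monotonicity of the spectral cut-offs $\{S_n^2\}$ then transfers to $\{K_n^2\}$, and hence $\{\Psi_n\}$ is nondecreasing. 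Finally, \eqref{eq:9.6} -- established in the proof of Theorem \ref{thm_positivedom} precisely by exploiting the $\Theta$-absolute continuity of $\Psi$ -- identifies $\lim_n\Psi_n$ with $\Psi$, so $\Psi=\sup_n\Psi_n$, which is exactly almost dominance.

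For (ii)$\Rightarrow$(i), condition (i) of Definition \ref{defn_abs_cont} is immediate: if $\Theta(\xi,\xi)=0$, then $\Psi_k(\xi,\xi)\le\gamma_k\Theta(\xi,\xi)=0$ for every $k$, whence $\Psi(\xi,\xi)=\sup_k\Psi_k(\xi,\xi)=0$. For closability, let $\{\xi_n\}\subset\D$ satisfy $\|j_{\sss{\Theta}}(\xi_n)\|_{\sss{\Theta}}\to 0$ and $\{j_{\sss{\Psi}}(\xi_n)\}$ Cauchy in $\H_{\sss{\Psi}}$. Fix $\varepsilon>0$ and choose $N$ with $\Psi(\xi_n-\xi_m,\xi_n-\xi_m)<\varepsilon$ for $n,m\ge N$; then, since $\Psi_k\le\Psi$, also $\Psi_k(\xi_n-\xi_m,\xi_n-\xi_m)<\varepsilon$ for every $k$. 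Fixing $k$ and $m\ge N$, the triangle inequality for the seminorm $\Psi_k(\cdot,\cdot)^{1/2}$ gives
$$ \Psi_k(\xi_m,\xi_m)^{1/2}\le\Psi_k(\xi_m-\xi_n,\xi_m-\xi_n)^{1/2}+\Psi_k(\xi_n,\xi_n)^{1/2}, $$
and sending $n\to\infty$ (using $\Psi_k(\xi_n,\xi_n)\le\gamma_k\Theta(\xi_n,\xi_n)\to 0$) yields $\Psi_k(\xi_m,\xi_m)^{1/2}\le\varepsilon^{1/2}$. Taking the supremum over $k$ gives $\Psi(\xi_m,\xi_m)\le\varepsilon$ for all $m\ge N$, so $\|j_{\sss{\Psi}}(\xi_n)\|_{\sss{\Psi}}\to 0$, as required.

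The main obstacle is essentially bookkeeping in the forward direction: verifying that the $\Psi_n$ produced from Theorem \ref{thm_positivedom} are indeed nondecreasing (not merely dominated by $\Theta$), which hinges on the spectral-calculus identity $K_n^2=U\ad S_n^2 U$ together with the commutativity of $UU\ad$ with the $S_n$. Once this is in hand both directions are short: the forward direction piggy-backs on the non-trivial work already carried out in the proof of Theorem \ref{thm_positivedom}, while the converse is an elementary $\varepsilon$-argument exploiting that the Cauchy condition in $\H_{\sss{\Psi}}$ passes down uniformly to each dominated form $\Psi_k$.
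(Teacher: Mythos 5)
Your proposal is correct. The forward direction (i)$\Rightarrow$(ii) is essentially identical to the paper's: both recycle the operators $K_n$ from the proof of Theorem \ref{thm_positivedom}, set $\Psi_n(\xi,\eta)=\ip{K_n j_{\sss{\Theta}}(\xi)}{K_n j_{\sss{\Theta}}(\eta)}_{\sss{\Theta}}$, and invoke \eqref{eq:9.6} to get $\Psi=\sup_n\Psi_n$; you merely spell out the monotonicity of $\{K_n\}$ via $K_n^2=U\ha S_n^2U$, which the paper dismisses as ``not difficult to show'' already inside the proof of Theorem \ref{thm_positivedom}. The real divergence is in (ii)$\Rightarrow$(i): the paper does not prove this direction at all but outsources it to \cite[Theorem 3.8]{hassi} combined with Theorem \ref{thm_positivedom}, whereas you give a short self-contained $\varepsilon$-argument showing directly that almost dominance forces ${\sf N}(\Theta)\subseteq{\sf N}(\Psi)$ and the closability of $j_{\sss{\Theta}}(\xi)\mapsto j_{\sss{\Psi}}(\xi)$, by passing the Cauchy estimate for $\Psi$ down uniformly to each dominated $\Psi_k$ and taking the supremum at the end. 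Your version buys a proof that stays entirely within the paper's operator-theoretic framework and avoids the external reference; the paper's citation is shorter but less transparent. Both are valid.
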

\begin{proof}
(i)$\Rightarrow$(ii):\; Set $$\Psi_n(\xi, \eta):= \ip{K_n j_{\sss{\Theta}}(\xi)}{ K_n j_{\sss{\Theta}}(\eta) }_{\sss{\Theta}},   \; \forall \, \xi,\eta
\in \D,$$ where the ${K_n}'s$ are the bounded operators in $\H_{\sss{\Theta}}$ defined in the proof of (i)$\Rightarrow$(ii) of Theorem \ref{thm_positivedom}. Then it is easily seen that the nondecreasing sequence $\{\Psi_n\}$ consists of $\Theta$-dominated forms and, as shown in the same proof, $\Psi=\sup_{n \in {\mb N}} \Psi_n$. Hence $\Psi$ is almost dominated by $\Theta$.

(ii)$\Rightarrow$(i):\; This follows from \cite[Theorem 3.8]{hassi} and Theorem \ref{thm_positivedom}.
\end{proof}

\section{Lebesgue-like decomposition}\label{sect_lebesgue}
At this point of our discussion it is natural to pose the question as to whether a Lebesgue-like decomposition holds for a sesquilinear form $\Omega$ for which $\M(\Omega)$ is nonempty. Before considering this question we need to precise the notion of {\em singular} form.
\bedefi Let $\Theta$, $\Omega$  be sesquilinear forms on $\D\times \D$, with $\Theta$ positive. We say that $\Omega$ is $\Theta$-singular if, for every $\xi \in \D$, there exists a sequence $\{\xi_n\}$ in $\D$ such that $$\lim_{n\to \infty}\Theta(\xi_n,\xi_n) =0\;\mbox{ and }\;\lim_{n\to \infty}\Omega(\xi_n-\xi, \xi_n-\xi)=0.$$

\findefi

The previous definition in the case of positive sesquilinar forms coincides with the traditional one.
\begin{prop}\label{prop_28}
Let $\Psi$, $\Theta$ be two positive sesquilinear forms on $\D\times \D$. Then the following assertions are equivalent:\\
(i) $\Psi$ is $\Theta$-singular;\\
(ii)If $\sigma$ is a positive sesquilinear form with $\sigma\leq\Psi$ and $\sigma\leq\Theta$, then $\sigma =0$.
\end{prop}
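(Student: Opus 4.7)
The plan is to recast both forms as operator sandwiches on the common Hilbert space $\H_{\sss{\Theta+\Psi}}$ by applying Lemma \ref{lemma_oone} with dominating form $\Theta+\Psi$. This produces positive contractions $A,B\in\B(\H_{\sss{\Theta+\Psi}})$ such that
$$\Theta(\xi,\eta)=\ip{Aj_{\sss{\Theta+\Psi}}(\xi)}{j_{\sss{\Theta+\Psi}}(\eta)}_{\sss{\Theta+\Psi}},\quad \Psi(\xi,\eta)=\ip{Bj_{\sss{\Theta+\Psi}}(\xi)}{j_{\sss{\Theta+\Psi}}(\eta)}_{\sss{\Theta+\Psi}},$$
and the identity $(\Theta+\Psi)(\xi,\eta)=\ip{j_{\sss{\Theta+\Psi}}(\xi)}{j_{\sss{\Theta+\Psi}}(\eta)}_{\sss{\Theta+\Psi}}$ together with density of $j_{\sss{\Theta+\Psi}}(\D)$ in $\H_{\sss{\Theta+\Psi}}$ forces $A+B=I$.

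For (i)$\Rightarrow$(ii), let $\sigma\ge 0$ satisfy $\sigma\le\Theta$ and $\sigma\le\Psi$, fix $\xi\in\D$, and choose $\{\xi_n\}$ from the definition of $\Theta$-singularity. Expand
$$\sigma(\xi,\xi)=\sigma(\xi-\xi_n,\xi-\xi_n)+2\Re\sigma(\xi-\xi_n,\xi_n)+\sigma(\xi_n,\xi_n).$$
The first summand is bounded by $\Psi(\xi-\xi_n,\xi-\xi_n)\to 0$, the third by $\Theta(\xi_n,\xi_n)\to 0$, and Cauchy--Schwarz for $\sigma$ controls the middle one by $2\sigma(\xi-\xi_n,\xi-\xi_n)^{1/2}\sigma(\xi_n,\xi_n)^{1/2}\to 0$. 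Hence $\sigma(\xi,\xi)=0$ for every $\xi\in\D$, so $\sigma=0$.

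For (ii)$\Rightarrow$(i), any positive $\sigma\le\Theta$ also satisfies $\sigma\le\Theta+\Psi$, so Lemma \ref{lemma_oone} represents it as $\sigma(\xi,\eta)=\ip{Cj_{\sss{\Theta+\Psi}}(\xi)}{j_{\sss{\Theta+\Psi}}(\eta)}_{\sss{\Theta+\Psi}}$ with $0\le C\le I$; density of $j_{\sss{\Theta+\Psi}}(\D)$ then turns the form inequalities $\sigma\le\Theta$ and $\sigma\le\Psi$ into the operator inequalities $C\le A$ and $C\le B=I-A$. Since $A$ and $I-A$ commute, their functional-calculus minimum $A\wedge(I-A)$ is a bounded positive operator below both, so hypothesis (ii) forces $A\wedge(I-A)=0$; equivalently, the spectrum of $A$ lies in $\{0,1\}$, $A$ is an orthogonal projection, and $AB=0$, $B^2=B$. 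Given $\xi\in\D$, the density of $j_{\sss{\Theta+\Psi}}(\D)$ in $\H_{\sss{\Theta+\Psi}}$ lets us pick $\xi_n\in\D$ with $j_{\sss{\Theta+\Psi}}(\xi_n)\to Bj_{\sss{\Theta+\Psi}}(\xi)$; boundedness of $A,B$ then yields
$$\Theta(\xi_n,\xi_n)=\|Aj_{\sss{\Theta+\Psi}}(\xi_n)\|^2_{\sss{\Theta+\Psi}}\longrightarrow \|ABj_{\sss{\Theta+\Psi}}(\xi)\|^2_{\sss{\Theta+\Psi}}=0$$
and
$$\Psi(\xi_n-\xi,\xi_n-\xi)=\|B(j_{\sss{\Theta+\Psi}}(\xi_n)-j_{\sss{\Theta+\Psi}}(\xi))\|^2_{\sss{\Theta+\Psi}}\longrightarrow \|(B^2-B)j_{\sss{\Theta+\Psi}}(\xi)\|^2_{\sss{\Theta+\Psi}}=0,$$
confirming (i). The principal obstacle is the step that transforms the form-level non-overlap hypothesis (ii) into the operator identity $A\wedge(I-A)=0$; this hinges on invoking Lemma \ref{lemma_oone} on an arbitrary $\sigma$ and then exploiting the commutativity of $A$ with $I-A$ so that the functional calculus furnishes a genuine pointwise minimum.
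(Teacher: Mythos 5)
Your argument is correct and complete; note, however, that the paper states Proposition \ref{prop_28} \emph{without proof}, so there is no in-text argument to measure it against. What you have written is fully consistent with the machinery the authors deploy elsewhere: your decomposition $A+B=I$ on $\H_{\sss{\Theta+\Psi}}$ obtained from Lemma \ref{lemma_oone} is precisely the pair $C$, $I-C$ (your $A$ is their $C$) appearing in the proof of Theorem \ref{thm_positivedom}, and your approximating sequence $j_{\sss{\Theta+\Psi}}(\xi_n)\to Bj_{\sss{\Theta+\Psi}}(\xi)$ in the direction (ii)$\Rightarrow$(i) mirrors their choice $j_{\sss{\Theta+\Psi}}(\xi_n)\to P_{\sss{\Theta+\Psi}}j_{\sss{\Theta+\Psi}}(\xi)$ in the singularity part of the proof of Theorem \ref{thm_lebesgue} (once $A$ is known to be a projection, your $B$ \emph{is} that projection onto $\ker A$). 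Two minor observations, neither affecting correctness: first, the opening sentence of your (ii)$\Rightarrow$(i) paragraph, which converts arbitrary form inequalities $\sigma\le\Theta$, $\sigma\le\Psi$ into operator inequalities $C\le A$, $C\le B$, is never used --- you only need the immediate converse, namely that an operator sandwiched below both $A$ and $I-A$ induces a positive form dominated by both $\Theta$ and $\Psi$; second, you could replace $A\wedge(I-A)$ by the commuting product $A(I-A)$, which is positive, satisfies $A-A(I-A)=A^2\ge 0$ and $(I-A)-A(I-A)=(I-A)^2\ge 0$, and whose vanishing says directly that $A$ is idempotent, avoiding the detour through the spectrum.
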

The previous proposition shows the symmetry of the notion of singularity for positive sesquilinear forms (by exchanging the roles of $\Theta$ and $\Psi$) which is lost if one of them is not necessarily positive.

The following Theorem \ref{thm_lebesgue} gives a variant of the Lebesgue decomposition theorem.

\begin{thm}\label{thm_lebesgue} Let $\Theta$, $\Omega$ be sesquilinear forms on $\D\times \D$, with $\Theta$ positive. If $\M(\Omega)$ is nonempty, then $\Omega$ can be decomposed into
$$ \Omega= \Omega_r +\Omega_s$$
where $\Omega_r$ is $\Theta$-regular and $\Omega_s$ is $\Theta$-singular.

\begin{proof} We choose $\Psi \in \M(\Omega)$ and follow essentially the proof of Theorem \ref{thm_positivedom}. The construction of the sequence $\{K_n\}$ does not depend in fact on the $\Theta$-absolute continuity of $\Omega$ required there. Hence from \eqref{eqn_definingseq} we get the representation
$$ \Psi(\xi,\eta)= \ip{Kj_{\sss{\Theta }}(\xi)}{Kj_{\sss{\Theta }}(\eta)}_{\sss{\Theta}}+\ip{P_{\sss{\Theta + \Psi}} j_{\sss{\Theta + \Psi}}(\xi)}{j_{\sss{\Theta + \Psi}}(\eta)}_{\sss{\Theta + \Psi}},$$
which already proves the statement for a positive sesquilinear form $\Psi$, once one proves that the second term of the right hand side of the previous equation is a singular form (this will be done later).
The kernel of the map ${\sf u}$ defined in \eqref{eqn_map_u}, if $\Omega$ is not $\Theta$-regular, coincides with the kernel of the operator $B$ defined in \eqref{eq:9.4}. Hence \eqref{eqn_secondkato} becomes
\begin{align*}
\ip{\tilde{\xi}}{\tilde{\eta}}_{\sss{\Theta+\Psi}}&=\ip{H\emb(\tilde{\xi})}{H\emb(\tilde{\eta})}_{\sss{\Theta}} \\ &+\ip{P_{\sss{\Theta + \Psi}} \tilde{\xi}}{P_{\sss{\Theta + \Psi}}\tilde\eta}_{\sss{\Theta + \Psi}}, \quad \forall \tilde{\xi}, \tilde{\eta}\in \H_{\sss{\Theta+\Psi}}. \end{align*}
Thus \eqref{eqn_10} reads as follows
\begin{align*} \Omega(\xi, \eta) &= \ip{H \emb(Y_{\sss{\Theta+\Psi}} j_{\sss{\Theta+\Psi}}(\xi))}{H j_{\sss{\Theta}}(\eta)}_{\sss{\Theta}}\\&+\ip{P_{\sss{\Theta + \Psi}} Y_{\sss{\Theta+\Psi}} j_{\sss{\Theta+\Psi}}(\xi)}
{P_{\sss{\Theta + \Psi}}j_{\sss{\Theta+\Psi}}(\eta)}_{\sss{\Theta + \Psi}} , \quad \forall \xi, \eta \in \D.
\end{align*}
We now define an operator $Z$ from $j_{\sss\Theta}(\D)$ into $\H_{\sss\Theta}$ by putting $Z j_{\sss{\Theta}}(\xi)= \emb(Y_{\sss{\Theta+\Psi}}(I-P_{\sss{\Theta+\Psi}}) j_{\sss{\Theta+\Psi}}(\xi))$. Since $j_{\sss{\Theta}}(\xi)=0$ if and only if $j_{\sss{\Theta+\Psi}}(\xi)\in \ker B$,   $Z$ is well-defined.
Now we define, for every $\xi, \eta \in \D$,
\begin{align} \label{eqn_reg}\Omega_r(\xi, \eta) &= \ip{H Z j_{\sss{\Theta}}(\xi))}{H j_{\sss{\Theta}}(\eta)}_{\sss{\Theta}}; \\
\label{eqn_sing} \Omega_s(\xi, \eta)& = \ip{H\emb(Y_{\sss{\Theta+\Psi}}P_{\sss{\Theta+\Psi}} j_{\sss{\Theta+\Psi}}(\xi))}{H j_{\sss{\Theta}}(\eta)}_{\sss{\Theta}}\\ &+ \ip{P_{\sss{\Theta + \Psi}} Y_{\sss{\Theta+\Psi}} j_{\sss{\Theta+\Psi}}(\xi)}
{P_{\sss{\Theta + \Psi}}j_{\sss{\Theta+\Psi}}(\eta)}_{\sss{\Theta + \Psi}}.\nonumber\end{align}
It is clear that $\Omega=\Omega_r+\Omega_s$. It remains to prove that $\Omega_r$ and $\Omega_s$ have the desired properties. The $\Theta$-regularity of $\Omega_r$ can be proved in the very same way of the corresponding proof for $\Omega$ at the end of the proof of Theorem \ref{thm_positivedom} and we omit the details. As for $\Omega_s$, let $\xi \in \D$ and $\{\xi_n\}$ a sequence in $\D$ such that $j_{\sss{\Theta+\Psi}}(\xi_n)\to P_{\sss{\Theta + \Psi}}j_{\sss{\Theta+\Psi}}(\xi)$. Then, as in Theorem \ref{thm_positivedom}, $\|j_{\sss{\Theta}}(\xi_n)\|_{\sss{\Theta}}\to 0$.
Then we have
\begin{align*} \|H\emb(Y_{\sss{\Theta+\Psi}}P_{\sss{\Theta+\Psi}} j_{\sss{\Theta+\Psi}}(\xi-\xi_n)) \|_{\sss{\Theta}}^2&= \|Y_{\sss{\Theta+\Psi}}P_{\sss{\Theta+\Psi}} j_{\sss{\Theta+\Psi}}(\xi-\xi_n)
\|_{\sss{\Theta+\Psi}}^2\\ &-\|P_{\sss{\Theta + \Psi}}Y_{\sss{\Theta+\Psi}}P_{\sss{\Theta+\Psi}}j_{\sss{\Theta+\Psi}}(\xi-\xi_n)\|_{\sss{\Theta+\Psi}}^2\end{align*}
and both terms of the right hand side tend to 0, by the boundedness of $Y_{\sss{\Theta+\Psi}}$ and by the definition of $\{\xi_n\}$.

Moreover
\begin{align*}\| H j_{\sss{\Theta}}(\xi-\xi_n) \|_{\sss{\Theta}}^2&=\|j_{\sss{\Theta+\Psi}}(\xi-\xi_n)\|_{\sss{\Theta+\Psi}}^2 - \|P_{\sss{\Theta + \Psi}}j_{\sss{\Theta+\Psi}}(\xi-\xi_n)\|_{\sss{\Theta+\Psi}}^2\\
&\to \|j_{\sss{\Theta+\Psi}}(\xi)-P_{\sss{\Theta + \Psi}}j_{\sss{\Theta+\Psi}}(\xi)\|_{\sss{\Theta+\Psi}}^2.
\end{align*}
A simple application of the Cauchy-Schwarz inequality shows then that
$$ \ip{H\emb(Y_{\sss{\Theta+\Psi}}P_{\sss{\Theta+\Psi}} j_{\sss{\Theta+\Psi}}(\xi-\xi_n))}{H j_{\sss{\Theta}}(\xi-\xi_n)}_{\sss{\Theta}}\to 0.$$

Finally, since
$$ P_{\sss{\Theta+\Psi}}Y_{\sss{\Theta+\Psi}} j_{\sss{\Theta+\Psi}}(\xi-\xi_n)\to P_{\sss{\Theta+\Psi}}Y_{\sss{\Theta+\Psi}}(I-P_{\sss{\Theta+\Psi}})j_{\sss{\Theta+\Psi}}(\xi)$$
and
$$P_{\sss{\Theta+\Psi}}j_{\sss{\Theta+\Psi}}(\xi-\xi_n)\to 0,$$ we conclude that
$$ \ip{P_{\sss{\Theta+\Psi}}Y_{\sss{\Theta+\Psi}} j_{\sss{\Theta+\Psi}}(\xi-\xi_n)}{P_{\sss{\Theta+\Psi}}j_{\sss{\Theta+\Psi}}(\xi-\xi_n)}_{\sss{\Theta+\Psi}}\to 0.$$
So that
$$\lim_{n\to\infty}\Omega_s(\xi-\xi_n,\xi-\xi_n)=0;$$
that is, $\Omega_s$ is $\Theta$-singular.
\end{proof}

\end{thm}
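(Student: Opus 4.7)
The plan is to replay the construction in the proof of Theorem \ref{thm_positivedom}, but without assuming $\Theta$-absolute continuity of the chosen form in $\M(\Omega)$; the failure of that hypothesis is precisely what will produce the singular summand.

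I would pick any $\Psi \in \M(\Omega)$ and set up the same machinery: by Lemma \ref{lemma_oone} there is a positive contraction $B$ on $\H_{\sss{\Theta+\Psi}}$ with $\Theta(\xi,\eta) = \ip{B j_{\sss{\Theta+\Psi}}(\xi)}{B j_{\sss{\Theta+\Psi}}(\eta)}_{\sss{\Theta+\Psi}}$, and the isometry defined by $U j_{\sss{\Theta}}(\xi) = B j_{\sss{\Theta+\Psi}}(\xi)$ has range $(\ker B)^\perp$. The operators $K_n = U\ha S_n U$ built from the spectral resolution of $B$ still yield a positive operator $K$ on $\H_{\sss{\Theta}}$. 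Without absolute continuity, however, the computation \eqref{eqn_definingseq} does not collapse: the term $\ip{P_{\sss{\Theta+\Psi}} j_{\sss{\Theta+\Psi}}(\xi)}{j_{\sss{\Theta+\Psi}}(\eta)}_{\sss{\Theta+\Psi}}$ survives. Correspondingly, setting $H = (I+K^2)^{1/2}$, the identity \eqref{eqn_secondkato} is replaced by
$$\ip{\tilde\xi}{\tilde\eta}_{\sss{\Theta+\Psi}} = \ip{H \emb(\tilde\xi)}{H \emb(\tilde\eta)}_{\sss{\Theta}} + \ip{P_{\sss{\Theta+\Psi}} \tilde\xi}{P_{\sss{\Theta+\Psi}}\tilde\eta}_{\sss{\Theta+\Psi}},$$
where $\emb$ is now defined only on the quotient of $\H_{\sss{\Theta+\Psi}}$ modulo $\ker B$.

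Next I would represent $\Omega$ by a bounded operator $Y_{\sss{\Theta+\Psi}}$ on $\H_{\sss{\Theta+\Psi}}$ (using \eqref{eq:domination} to extend the associated bounded sesquilinear form on $j_{\sss{\Theta+\Psi}}(\D)$) and substitute the split inner product above into the resulting expression for $\Omega$. Inserting $I = (I - P_{\sss{\Theta+\Psi}}) + P_{\sss{\Theta+\Psi}}$ in front of $Y_{\sss{\Theta+\Psi}} j_{\sss{\Theta+\Psi}}(\xi)$, the form $\Omega$ breaks naturally as $\Omega_r + \Omega_s$: the regular piece is built from $Z j_{\sss{\Theta}}(\xi) := \emb(Y_{\sss{\Theta+\Psi}}(I-P_{\sss{\Theta+\Psi}}) j_{\sss{\Theta+\Psi}}(\xi))$, which is well-defined because $j_{\sss{\Theta}}(\xi)=0$ forces $j_{\sss{\Theta+\Psi}}(\xi) \in \ker B$, while the singular piece collects the two terms involving $P_{\sss{\Theta+\Psi}}$. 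The $\Theta$-regularity of $\Omega_r$ then follows by repeating the closedness and boundedness chain from the last part of the proof of Theorem \ref{thm_positivedom}, with $Z$ playing the role of $Y$.

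The hard part will be verifying that $\Omega_s$ is $\Theta$-singular: for each $\xi \in \D$, I must exhibit $\{\xi_n\} \subset \D$ with $\Theta(\xi_n,\xi_n) \to 0$ and $\Omega_s(\xi_n - \xi, \xi_n - \xi) \to 0$. I would pick a sequence with $j_{\sss{\Theta+\Psi}}(\xi_n) \to P_{\sss{\Theta+\Psi}} j_{\sss{\Theta+\Psi}}(\xi)$; then $U j_{\sss{\Theta}}(\xi_n) = B j_{\sss{\Theta+\Psi}}(\xi_n) \to B P_{\sss{\Theta+\Psi}} j_{\sss{\Theta+\Psi}}(\xi) = 0$, so by isometry of $U$ one gets $\Theta(\xi_n,\xi_n) \to 0$. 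Expanding $\Omega_s(\xi - \xi_n, \xi - \xi_n)$ into its two pieces, the mixed $H$-term is controlled by Cauchy--Schwarz together with boundedness of $Y_{\sss{\Theta+\Psi}}$ and the bounded behavior of $\|H j_{\sss{\Theta}}(\xi - \xi_n)\|_{\sss{\Theta}}$, while the pure-projection term vanishes because $P_{\sss{\Theta+\Psi}} j_{\sss{\Theta+\Psi}}(\xi - \xi_n) \to 0$ by the very choice of $\{\xi_n\}$. This cross-term bookkeeping is where most of the technical effort lies.
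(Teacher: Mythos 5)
Your proposal is correct and follows essentially the same route as the paper's own proof: re-run the construction of Theorem \ref{thm_positivedom} for an arbitrary $\Psi\in\M(\Omega)$, keep the surviving projection term $\ip{P_{\sss{\Theta+\Psi}}\,\cdot}{P_{\sss{\Theta+\Psi}}\,\cdot}_{\sss{\Theta+\Psi}}$, split $\Omega$ by inserting $I=(I-P_{\sss{\Theta+\Psi}})+P_{\sss{\Theta+\Psi}}$ in front of $Y_{\sss{\Theta+\Psi}}$, and verify singularity of $\Omega_s$ along a sequence with $j_{\sss{\Theta+\Psi}}(\xi_n)\to P_{\sss{\Theta+\Psi}}j_{\sss{\Theta+\Psi}}(\xi)$. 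The cross-term estimate you flag as the technical core is exactly the one the paper carries out.
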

\berem As in the case of positive sesquilinear forms (see the discussion in \cite[Section 4]{hassi} and also \cite{sebestyen}), the Lebesgue-like decomposition is not unique.
\enrem

As a consequence of the previous theorem we find, clearly, the results obtained in \cite[Theorem 2.11; Proposition 3.7]{hassi}. For the sake of completeness we show explicitly (see also \cite[Theorem 2.2]{simon}) how they can be recovered using the techniques developed in this paper.
\begin{cor}\label{thm_lebesgue2} Let $\Theta$, $\Psi$ be  positive sesquilinear forms on $\D\times \D$. Then $\Psi$ can be decomposed into
$$ \Psi= \Psi_a +\Psi_s$$
where $\Psi_a$ is $\Theta$-absolutely continuous and $\Psi_s$ is $\Theta$-singular.

Moreover, if $\Phi$ is $\Theta$-absolutely continuous and $\Phi\leq \Psi$, then $\Phi\leq \Psi_a$.
\end{cor}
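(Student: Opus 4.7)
The plan is straightforward: apply Theorem \ref{thm_lebesgue} to $\Omega=\Psi$ to obtain the decomposition, then establish the maximality by a closability argument in the spirit of Theorem \ref{thm_positivedom}.

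\textbf{Decomposition.} Since $\Psi$ is positive, $\Psi\in \M(\Psi)$, so Theorem \ref{thm_lebesgue} applies. Running its construction with the auxiliary form taken to be $\Psi$ itself, and exploiting the fact that the projection $P_{\sss{\Theta+\Psi}}$ onto $\ker B$ commutes with $I-B^2$, the regular and singular parts take the explicit form
$$\Psi_a(\xi,\eta)=\ip{K j_{\sss{\Theta}}(\xi)}{K j_{\sss{\Theta}}(\eta)}_{\sss{\Theta}},\qquad \Psi_s(\xi,\eta)=\ip{P_{\sss{\Theta+\Psi}} j_{\sss{\Theta+\Psi}}(\xi)}{j_{\sss{\Theta+\Psi}}(\eta)}_{\sss{\Theta+\Psi}}.$$
The form $\Psi_a$ is $\Theta$-absolutely continuous because, up to the natural isometry, the map $j_{\sss{\Theta}}(\xi)\mapsto j_{\sss{\Psi_a}}(\xi)$ is the action of the closed positive operator $K$ on $j_{\sss{\Theta}}(\D)$, and hence closable; while $\Psi_s$ is $\Theta$-singular by the argument used for $\Omega_s$ in Theorem \ref{thm_lebesgue}.

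\textbf{Maximality.} Let $\Phi$ be $\Theta$-absolutely continuous with $\Phi\leq \Psi$. Fix $\xi\in \D$ and, by density of $j_{\sss{\Theta+\Psi}}(\D)$ in $\H_{\sss{\Theta+\Psi}}$, pick $\{\xi_n\}\subset \D$ with $j_{\sss{\Theta+\Psi}}(\xi_n)\to P_{\sss{\Theta+\Psi}}j_{\sss{\Theta+\Psi}}(\xi)$. Using the isometric identity $U j_{\sss{\Theta}}(\xi_n)=B j_{\sss{\Theta+\Psi}}(\xi_n)$ together with $BP_{\sss{\Theta+\Psi}}=0$, exactly as in the proof of Theorem \ref{thm_positivedom}, one gets $\|j_{\sss{\Theta}}(\xi_n)\|_{\sss{\Theta}}\to 0$. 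The hypothesis $\Phi\leq \Psi$ then yields
$$\|j_{\sss{\Phi}}(\xi_n-\xi_m)\|_{\sss{\Phi}}^2\leq \Psi(\xi_n-\xi_m,\xi_n-\xi_m)=\ip{(I-B^2)j_{\sss{\Theta+\Psi}}(\xi_n-\xi_m)}{j_{\sss{\Theta+\Psi}}(\xi_n-\xi_m)}_{\sss{\Theta+\Psi}},$$
which tends to $0$ because $j_{\sss{\Theta+\Psi}}(\xi_n)$ converges; hence $\{j_{\sss{\Phi}}(\xi_n)\}$ is Cauchy, and the closability of $j_{\sss{\Theta}}(\xi)\mapsto j_{\sss{\Phi}}(\xi)$ forces $\Phi(\xi_n,\xi_n)\to 0$.

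To close the argument I expand
$$\Phi(\xi,\xi)=\Phi(\xi-\xi_n,\xi-\xi_n)+2\Re\Phi(\xi-\xi_n,\xi_n)+\Phi(\xi_n,\xi_n),$$
and kill the cross term via Cauchy-Schwarz: $\Phi(\xi-\xi_n,\xi-\xi_n)\leq \Psi(\xi-\xi_n,\xi-\xi_n)$ stays bounded while $\Phi(\xi_n,\xi_n)\to 0$. Hence $\Phi(\xi,\xi)=\lim_n \Phi(\xi-\xi_n,\xi-\xi_n)\leq \lim_n \Psi(\xi-\xi_n,\xi-\xi_n)$. Since $j_{\sss{\Theta+\Psi}}(\xi-\xi_n)\to (I-P_{\sss{\Theta+\Psi}})j_{\sss{\Theta+\Psi}}(\xi)$, continuity of the inner product identifies the last limit with $\ip{(I-B^2)(I-P_{\sss{\Theta+\Psi}})j_{\sss{\Theta+\Psi}}(\xi)}{j_{\sss{\Theta+\Psi}}(\xi)}_{\sss{\Theta+\Psi}}=\Psi_a(\xi,\xi)$, so $\Phi\leq \Psi_a$. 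The only delicate point, where I expect most of the bookkeeping to go, is the identification of the two expressions for $\Psi_a$ (via $K$ on $\H_{\sss{\Theta}}$, and via $(I-B^2)(I-P_{\sss{\Theta+\Psi}})$ on $\H_{\sss{\Theta+\Psi}}$); this is already implicit in the computation \eqref{eqn_definingseq} within the proof of Theorem \ref{thm_positivedom}.
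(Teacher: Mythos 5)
Your proof is correct. The decomposition half is exactly the paper's: both read off the two pieces from the construction in Theorem \ref{thm_positivedom}, taking $\Psi_a(\xi,\eta)=\lim_n\ip{K_nj_{\sss{\Theta}}(\xi)}{K_nj_{\sss{\Theta}}(\eta)}_{\sss{\Theta}}$ and $\Psi_s(\xi,\eta)=\ip{P_{\sss{\Theta+\Psi}}j_{\sss{\Theta+\Psi}}(\xi)}{j_{\sss{\Theta+\Psi}}(\eta)}_{\sss{\Theta+\Psi}}$. For the maximality you take a differently organized, but equally valid, route: the paper introduces the contraction $I_{\sss{\Psi,\Phi}}\colon \H_{\sss{\Theta+\Psi}}\to\H_{\sss{\Theta+\Phi}}$, $j_{\sss{\Theta+\Psi}}(\xi)\mapsto j_{\sss{\Theta+\Phi}}(\xi)$, proves $\ker B\subseteq \ker I_{\sss{\Psi,\Phi}}$ using the $\Theta$-absolute continuity of $\Phi$, and then gets $(\Theta+\Phi)(\xi,\xi)=\|I_{\sss{\Psi,\Phi}}(I-P_{\sss{\Theta+\Psi}})j_{\sss{\Theta+\Psi}}(\xi)\|^2_{\sss{\Theta+\Phi}}\le\|(I-P_{\sss{\Theta+\Psi}})j_{\sss{\Theta+\Psi}}(\xi)\|^2_{\sss{\Theta+\Psi}}=(\Theta+\Psi_a)(\xi,\xi)$ in one line from contractivity. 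You avoid the auxiliary space $\H_{\sss{\Theta+\Phi}}$ entirely and argue pointwise: approximate $P_{\sss{\Theta+\Psi}}j_{\sss{\Theta+\Psi}}(\xi)$ by $j_{\sss{\Theta+\Psi}}(\xi_n)$, deduce $\Phi(\xi_n,\xi_n)\to0$ from $\Phi\le\Psi$ together with closability, and then expand $\Phi(\xi,\xi)$ around $\xi_n$, disposing of the cross term by Cauchy--Schwarz. The underlying mechanism --- $\Theta$-absolute continuity of $\Phi$ forces $\Phi$ to vanish along sequences whose $(\Theta+\Psi)$-images tend into $\ker B$ --- is precisely the one the paper uses to establish its kernel inclusion, so the two arguments differ only in packaging: yours trades the clean operator-norm estimate for some limit bookkeeping, while the step you flag as delicate, namely $\lim_n\Psi(\xi-\xi_n,\xi-\xi_n)=\Psi_a(\xi,\xi)$, is immediate from the commutation of $P_{\sss{\Theta+\Psi}}$ with $I-B^2$ already exploited in \eqref{eqn_definingseq}. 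Both proofs are sound; the paper's is slightly more economical, yours stays entirely at the level of forms and sequences.
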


\begin{proof}
Following the same construction made in the proof of  Theorem \ref{thm_positivedom} we define $$\Psi_a(\xi , \eta) = \lim_{n\to +\infty} \ip{K_n j_{\sss{\Theta}} (\xi)} {K_nj_{\sss{\Theta}} (\eta)}_{\sss{\Theta}}$$ $$\Psi_s(\xi, \eta) =\ip{P_{\sss{\Theta+\Psi}}j_{\sss{\Theta+\Psi}} (\xi)}{ j_{\sss{\Theta+\Psi}} (\eta)}_{\sss{\Theta+\Psi}},\quad \xi,\eta\in \D .$$ Then $\Psi_a$ and $\Psi_s$ verify the assertion.

{In order to prove the second statement, let us consider the operator $B$
constructed in the proof of Theorem \ref{thm_positivedom}. We keep the notations introduced there; in particular, we denote by $P_{\sss{\Theta+\Psi}}$ the projection onto ${\rm Ker\,}B$. Let ${I}_{\sss{\Psi, \Phi}}: \H_{\sss{\Theta+\Psi}} \to \H_{\sss{\Theta+\Phi}}$ be the linear map defined first by $${I}_{\sss{\Psi, \Phi}}:\,j_{\sss{\Theta+\Psi}}(\xi)\mapsto j_{\sss{\Theta+\Phi}}(\xi), \quad \xi \in \D.$$
 Then ${I}_{\sss{\Psi, \Phi}}$ is well-defined and contractive and extends to $\H_{\sss{\Theta+\Psi}}$ (we denote this extension by the same symbol). 
  Now, the $\Theta$-absolute continuity of $\Phi$ implies that ${\rm Ker\,}{I}_{\sss{\Psi, \Phi}}={\rm Ker\,}B$. 
The inclusion ${\rm Ker\,}{I}_{\sss{\Psi, \Phi}}\subseteq {\rm Ker\,}B$ is obvious. Suppose that $\zeta \in {\rm Ker\,}B$;  $\zeta =\lim_{n\to \infty}j_{\sss{\Theta+\Psi}}(\xi_n)$, for some sequence $\{\xi_n\} \subset \D$,  and $B\zeta =\lim_{n\to \infty}Bj_{\sss{\Theta+\Psi}}(\xi_n)=0$. Then, $\Theta(\xi_n, \xi_n)\to 0$. Now, since ${I}_{\sss{\Psi, \Phi}}\zeta = \lim_{n\to \infty}{I}_{\sss{\Psi, \Phi}}j_{\sss{\Theta+\Psi}}(\xi_n)=\lim_{n\to \infty}j_{\sss{\Theta+\Phi}}(\xi_n)$, necessarily $\Phi(\xi_n-\xi_m, \xi_n-\xi_m)\to 0$. By the $\Theta$-absolute continuity of $\Phi$, it follows that $\Phi(\xi_n, \xi_n)\to 0$. Hence ${I}_{\sss{\Psi, \Phi}}\zeta=0$.

 Then, we have
 \begin{align*}
( \Theta + \Phi)(\xi,\xi) &=\|j_{\sss{\Theta+\Phi}}(\xi)\|^2_{\sss{\Theta+\Phi}}= \|{I}_{\sss{\Psi, \Phi}}j_{\sss{\Theta+\Psi}}(\xi)\|^2_{\sss{\Theta+\Phi}}\\
&=\|{I}_{\sss{\Psi, \Phi}}(I- P_{\sss{\Theta+\Psi}})j_{\sss{\Theta+\Psi}}(\xi)\|^2_{\sss{\Theta+\Phi}}\\
&\leq \|(I- P_{\sss{\Theta+\Psi}})j_{\sss{\Theta+\Psi}}(\xi)\|^2_{\sss{\Theta+\Psi}}\\
&=\|j_{\sss{\Theta+\Psi}}(\xi)\|^2_{\sss{\Theta+\Psi}}-\ip{P_{\sss{\Theta+\Psi}}j_{\sss{\Theta+\Psi}}(\xi)}{j_{\sss{\Theta+\Psi}}(\xi)}_{\sss{\Theta+\Psi}}\\
&=(\Theta+\Psi)(\xi,\xi)-\Psi_s(\xi,\xi) = (\Theta+\Psi_a)(\xi,\xi).
 \end{align*}

  Thus $\Phi \leq \Psi_a$.}
\end{proof}

\berem The previous statements apply in particular when $\D$ is a pre-Hilbert space with inner product $\ip{\cdot}{\cdot}$, if we take $\Theta$ to be exactly equal to the inner product of $\D$. Let $\H$ be the norm-completion of $\D$. Then it is easy to check that a positive sesquilinear form $\Psi$ on $\D\times \D$ is $\ip{\cdot}{\cdot}$-absolutely continuous if, and only if $\Psi$ is closable in $\H$. Then the results of Theorem \ref{thm_positivedom} ( or, better, of Corollary \ref{thm_lebesgue2}) and Theorem \ref{thm_lebesgue} (or, better, of Corollary \ref{thm_lebesgue2}) reduce to the statements proved by Simon in \cite{simon} on the decomposition of closed (or closable) sesquilinear forms in $\H$. \enrem

\berem \label{rem_47}Let $\Theta$ be a positive sesquilinear form on $\D\times \D$. With obvious modification of current definitions, we say that a  symmetric sesquilinear form $\Omega$ is  {\em $\Theta$-bounded from below} if there exists $c \in {\mb R}$ such that $$ c\,\Theta(\xi,\xi) \leq \Omega(\xi,\xi), \quad \forall\xi\in \D.$$

Clearly, if $\Omega$ is $\Theta$-bounded from below, then $\Omega- c\, \Theta$ is positive.
Similarly, we call a sesquilinear form $\Omega$  on $\D\times \D$,
{\em $\Theta$-sectorial} if there exist $\delta \in {\mb R}$ and $\gamma >0$ such that
\begin{align} \label{eq_one}
&\; \Re\Omega(\xi,\xi) \geq \delta \Theta(\xi,\xi), \quad \forall \xi \in \D;\\
&| \Im \Omega(\xi, \xi)|\leq \gamma (\Re\Omega(\xi,\xi)-\delta \Theta(\xi,\xi)) , \quad \forall \xi \in \D.\nonumber
\end{align}

From the definition itself it follows that

$$|(\Omega -\delta \Theta)(\xi, \eta)|\leq (1+\gamma)((\Re\Omega-\delta \Theta)(\xi,\xi))^{1/2} (\Re \Omega-\delta \Theta)(\eta,\eta))^{1/2}, \quad \forall \xi, \eta \in \D;$$
this clearly means that $(1+\gamma)(\Re\Omega-\delta \Theta) \in \M(\Omega)$.

\enrem
For $\Theta$-sectorial forms, the foregoing results produce the following characterization whose easy proof will be omitted.
\begin{prop} \label{prop_sectorial}Let $\Omega$ be $\Theta$-sectorial. The following statements are equivalent.
\begin{itemize}
\item[(i)] The positive sesquilinear form $\Re\Omega -\delta \Theta$ is $\Theta$-absolutely continuous.
\item[(ii)] For every sequence $\{\xi_n\}$ in $\D$ such that
$$ \Theta(\xi_n, \xi_n)\to 0 \mbox{ and } \Re\Omega (\xi_n - \xi_m, \xi_n - \xi_m) \to 0,$$
$\Re\Omega(\xi_n, \xi_n) \to 0$ results.
\item[(iii)] $\Omega$ is $\Theta$-regular.
\end{itemize}
\end{prop}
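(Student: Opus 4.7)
The plan is to prove the cycle $\mathrm{(i)}\Leftrightarrow\mathrm{(ii)}$, $\mathrm{(i)}\Rightarrow\mathrm{(iii)}$, $\mathrm{(iii)}\Rightarrow\mathrm{(i)}$, with the last direction being the delicate one.

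First, $\mathrm{(i)}\Leftrightarrow\mathrm{(ii)}$ is a direct unpacking of Definition~\ref{defn_abs_cont} applied to the positive sesquilinear form $\Psi:=\Re\Omega-\delta\Theta$. The closability clause of that definition expands as follows: the Cauchy condition $\Psi(\xi_n-\xi_m,\xi_n-\xi_m)\to 0$ reduces, under the assumption $\Theta(\xi_n,\xi_n)\to 0$, to $\Re\Omega(\xi_n-\xi_m,\xi_n-\xi_m)\to 0$ (since $\Theta(\xi_n-\xi_m,\xi_n-\xi_m)\to 0$ follows for free by the triangle inequality in $\H_\Theta$), and the desired conclusion $\Psi(\xi_n,\xi_n)\to 0$ becomes $\Re\Omega(\xi_n,\xi_n)\to 0$. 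The null-space inclusion clause of Definition~\ref{defn_abs_cont} is then recovered by feeding constant sequences into the closability.

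For $\mathrm{(i)}\Rightarrow\mathrm{(iii)}$, Remark~\ref{rem_47} supplies $(1+\gamma)^2(\Re\Omega-\delta\Theta)\in\M(\Omega-\delta\Theta)$ as a consequence of sectoriality. If $\Re\Omega-\delta\Theta$ is $\Theta$-absolutely continuous, so is any positive scalar multiple of it, so $\Omega-\delta\Theta$ is $\Theta$-regular. Combining this with the trivial $\Theta$-regularity of $\delta\Theta$ and the observation (recorded right after Definition~\ref{defn_abs_cont}) that sums of $\Theta$-regular forms are $\Theta$-regular yields that $\Omega=(\Omega-\delta\Theta)+\delta\Theta$ is $\Theta$-regular.

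The main obstacle is $\mathrm{(iii)}\Rightarrow\mathrm{(i)}$. I set $\omega:=\Omega-\delta\Theta$, which remains $\Theta$-regular and sectorial with positive real part $\Phi=\Re\Omega-\delta\Theta$. By Theorem~\ref{thm_positivedom}, $\omega$ admits a representation $\omega(\xi,\eta)=\ip{HY j_\Theta(\xi)}{H j_\Theta(\eta)}_\Theta-\delta\ip{j_\Theta(\xi)}{j_\Theta(\eta)}_\Theta$ with the auxiliary $\Gamma$ being $\Theta$-absolutely continuous, which in particular packages the closedness of $H$ and the closability of $HY$ in $\H_\Theta$. The strategy is to view $\omega$ as a densely defined sectorial sesquilinear form on $j_\Theta(\D)\subset\H_\Theta$ and invoke Kato's classical equivalence (Ch.~VI of \cite{kato}) that a sectorial form is closable in the ambient Hilbert space if and only if its real part is. The hard step is verifying closability of $\omega$ itself in $\H_\Theta$ from the representation: given a sequence with $\Theta(\xi_n,\xi_n)\to 0$ and $\omega(\xi_n-\xi_m,\xi_n-\xi_m)\to 0$, one combines the sectorial pointwise bound $|\omega(\xi,\eta)|\leq(1+\gamma)\Phi(\xi,\xi)^{1/2}\Phi(\eta,\eta)^{1/2}$ from Remark~\ref{rem_47} with the absolute continuity of $\Gamma$ to force $\omega(\xi_n,\xi_n)\to 0$; Kato's equivalence then transfers this to the real part $\Re\omega=\Phi$, delivering~(i).
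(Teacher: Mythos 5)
The paper offers no proof of this proposition --- it is declared ``easy'' and omitted --- so your argument must stand on its own. Your treatment of (i)$\Leftrightarrow$(ii) and of (i)$\Rightarrow$(iii) is correct: the first really is an unpacking of Definition \ref{defn_abs_cont} for $\Psi=\Re\Omega-\delta\Theta$ (using that $\Theta(\xi_n-\xi_m,\xi_n-\xi_m)\to0$ comes for free from $\Theta(\xi_n,\xi_n)\to 0$), and the second follows from Remark \ref{rem_47} together with the stability of $\Theta$-regularity under sums and positive scalar multiples.

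The implication (iii)$\Rightarrow$(i), however, is not proved: the sentence ``one combines the sectorial pointwise bound with the absolute continuity of $\Gamma$ to force $\omega(\xi_n,\xi_n)\to0$'' is precisely the missing content, and no such combination exists. The only link between the $\Theta$-absolutely continuous objects that regularity provides (a $\Psi\in\M(\Omega)$, or the form $\Gamma$ of Theorem \ref{thm_positivedom}) and the target form $\Re\Omega-\delta\Theta$ is a one-sided domination on the diagonal, $\Re\Omega-\delta\Theta\le\Psi+|\delta|\Theta$; but $\Theta$-absolute continuity is \emph{not} inherited by forms dominated by a $\Theta$-absolutely continuous form, and the hypothesis $\omega(\xi_n-\xi_m,\xi_n-\xi_m)\to0$ gives no control on $\Gamma(\xi_n-\xi_m,\xi_n-\xi_m)$, so the closability of $\Gamma$ is never triggered. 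Concretely, take $\H=L^2(0,1)$, $\D=C^1[0,1]$, $\Theta=\gi$ and $\Omega(f,g)=f(0)\overline{g(0)}$. This form is positive, hence $\Theta$-sectorial with $\delta=0$, and it is $\Theta$-regular: the form $\Psi(f,g)=2\int_0^1\bigl(f\overline{g}+f'\overline{g'}\bigr)\,dx$ belongs to $\M(\Omega)$ (since $|f(0)|^2\le 2\|f\|^2+2\|f'\|^2$) and is $\Theta$-absolutely continuous. Yet $\Re\Omega-\delta\Theta=\Omega$ fails condition (ii): for $f_n(x)=e^{-nx}$ one has $\Theta(f_n,f_n)\to0$ and $\Omega(f_n-f_m,f_n-f_m)=0$, while $\Omega(f_n,f_n)=1$. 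So (iii)$\Rightarrow$(i) cannot be completed along the lines you sketch --- nor, as literally stated, at all; it becomes true only if one strengthens (iii), e.g.\ by requiring the witnessing $\Psi\in\M(\Omega)$ to satisfy $\Psi\le C(\Re\Omega-\delta\Theta+\Theta)$ for some $C>0$, in which case $\Psi$ and $\Re\Omega-\delta\Theta$ are mutually dominated modulo $\Theta$ and absolute continuity transfers.
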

\berem If $\D$ is a pre-Hilbert space and $\Theta(\cdot, \cdot)= \ip{\cdot}{\cdot}$ we simply call {\em sectorial} a $\ip{\cdot}{\cdot}$-sectorial form. In this case condition (ii) of the previous Proposition, simply says that
$\Re\Omega$  is closable; this means that $\Omega$ is closable in the sense of \cite{kato}.
\enrem

\medskip

If $\Omega$ is a $\Theta$-regular sesquilinear form on $\D\times \D$, taking into account the representation
$$ \Omega(\xi,\eta)= \ip{HY j_{\sss{\Theta}}(\xi)}{H j_{\sss{\Theta}}(\eta)}_{\sss{\Theta}} , \quad \forall \xi, \eta \in \D,$$
established in Theorem \ref{thm_positivedom}, it is natural to pose the question as to whether $\Omega$ can also be represented as
\begin{equation} \label{eqn_repOmega} \Omega(\xi,\eta)= \ip{T j_{\sss{\Theta}}(\xi)}{ j_{\sss{\Theta}}(\eta)}_{\sss{\Theta}} \end{equation}
at least when $\xi, \eta$ run onto a sufficiently large subspace of $\D$.
Let us define the following subspace of $\D$
$$\D\sggs:=\{\xi \in \D: HYj_{\sss \Theta}(\xi)\in D(H)\}.$$
Then it is clear that the operator $T:=H^2Y$ is well defined on $\D\sggs$ and \eqref{eqn_repOmega} holds, for every $\xi, \eta \in \D\sggs.$
Nevertheless, $\D\sggs$ can be very poor and since no topology is given to $\D$, the possibility of controlling the size of $\D\sggs$ seems to be hopeless. For this reason, we will confine this analysis (Section \ref{sect_solvable})  by considering $\D$ as a dense subspace of a Hilbert space $\H$, with inner product $\ip{\cdot}{\cdot}$ and we choose $\Theta(\cdot, \cdot)= \ip{\cdot}{\cdot}$.

\section{Solvable forms in Hilbert space} \label{sect_solvable}

In what follows we need the notion of Banach-Gelfand triplet which we recall for reader's convenience.
{Let $\H$ be a Hilbert space (with inner product $\ip{\cdot}{\cdot}$ and norm $\|\cdot\|$). Let $\E$ be a dense subspace of $\H$ which is a Banach space with respect to a norm $\|\cdot\|_\E$ defining on $\E$ a topology finer than that induced by the norm of $\H$. In this case, $\H$ can be continuously embedded into the conjugate Banach dual space $\E^\times$. We get in this way the {\em Banach-Gelfand triplet} (a special kind of rigged Hilbert space)
\begin{equation} \label{eqn_BGT}\E[\|\cdot\|_\E] \hookrightarrow \H[\|\cdot\|] \hookrightarrow \E^\times[\|\cdot\|_{\E^\times}],\end{equation}
where $\|\cdot\|_{\E^\times}$ denotes the usual norm of ${\E^\times}$.
If $\E[\|\cdot\|_\E]$ is a {\em reflexive} Banach space, then the embedding of $\H$ into $\E^\times[\|\cdot\|_{\E^\times}$ has {\em dense range}.}

\medskip
{If $\E$ and $\F$ are Banach spaces, we will use the notation ${\mc B}(\E, \F)$ for the vector space of all bounded operators from $\E$ into $\F$. If $\E=\F$ we put ${\mc B}(\E) ={\mc B}(\E, \E)$.

If $\E$ is reflexive and $X \in {\mc B}(\E, \E^\times)$ then, the operator $X^\dag$ (the {\em adjoint} of $X$), defined by
$$ \ip{X^\dag\xi}{\eta}= \overline{\ip{X\eta}{\xi}}, \quad \xi, \eta\in \D,$$ is also a member of ${\mc B}(\E, \E^\times)$.
\bedefi
Let $\Theta$ be a positive sesquilinear form on $\D\times \D$. We say that a norm $\|\cdot\|$ on $\D$ is {\em compatible} with $\Theta$ ({$\Theta$-{\em compatible}, for short}) if the following two conditions are fulfilled:
\begin{itemize}
\item[(s.1)] $\Theta(\xi, \xi) \leq \|\xi\|^2,\quad \forall \xi \in \D$;
\item[(s.2)] If $\{\xi_n\}$ is a sequence in $\D$ such that  $\Theta (\xi_n, \xi_n)\to 0$ and $\|\xi_n- \xi_m\|\to 0$, then $\|\xi_n\| \to 0$.
\end{itemize}
\findefi

If $\Theta$ possesses a compatible norm, then, clearly, ${\sf N}(\Theta)=\{0\}$. Let us denote by $\E $ the Banach space completion of $\D [\|\cdot\|] $.
Taking into account (s.1) and (s.2), it turns out that $\E$ can be identified with a dense subspace $D(\overline{\Theta})$ of $\H_{\sss\Theta}$ and so we can construct a Banach-Gelfand triplet
\begin{equation}\label{triplet2} \E \hookrightarrow \H_{\sss\Theta} \hookrightarrow \E ^\times\end{equation}
in standard way; as usual $\E ^\times$ denotes the conjugate dual of $\E $. The dual norm of $\E ^\times$ will be denoted by $\|\cdot\|^\times$.   We will assume that the form which puts $\E $ and $\E ^\times$ in duality is an extension of the inner product of $\H_{\sss{\Theta}}$. So that, if $\Lambda \in \E ^\times$ and $\hat \xi \in \E $ we may also write $\Lambda (\hat\xi)=\ip{\Lambda}{\hat\xi}_{\sss{\Theta}}$, for indicating the value that the conjugate linear functional $\Lambda$ takes at $\hat\xi$. This assumption also implies that the embedding $\H_{\sss{\Theta}}\hookrightarrow \E ^\times$ can be thought simply as an inclusion.

As announced before, from now on we assume that $\Theta(\cdot,\cdot)=\ip{\cdot}{\cdot}$ the inner product of a Hilbert space $\H$ and omit any reference to $\Theta$ in the notations.

\bedefi \label{defn_q_closed}Let $\H$ be a Hilbert space, with inner product $\ip{\cdot}{\cdot}$ and norm $\|\cdot\|$, and $\ggs$ a sesquilinear form on $\D\times \D$ with $ \D$ dense in $\H$. We say that $\ggs$ is {\em q-closable} if there exists a norm $\|\cdot\|\sggs$ on $ \D$, compatible with $\ip{\cdot}{\cdot}$ with the following properties:

\begin{itemize}
\item[(\sf cl.1)]$\|\xi\|\leq \|\xi\|\sggs, \; \forall \xi \in \D$;
\item[(\sf cl.2)] the completion $\E\sggs$ of $\D[\|\cdot\|\sggs]$ is a {reflexive} Banach space.
\item[(\sf cl.3)] there exists $\beta>0$ such that $|\ggs(\xi, \eta)|\leq \beta \|\xi\|\sggs\|\eta\|\sggs, \; \forall \xi, \eta \in \D$.
\end{itemize}
The form $\ggs$ is {\em q-closed} if $\D[\|\cdot\|\sggs]$ is a {reflexive} Banach space.

\findefi

To every q-closable sesquilinear form it is, therefore, canonically associated a Banach-Gelfand triplet
\begin{equation}\label{triplet} \E\sggs\hookrightarrow \H \hookrightarrow \E\sggs^\times\end{equation}
as described above.

For convenience we put $\gi(\xi,\eta)=\ip{\xi}{\eta}$ if we need to denote the inner product as a sesquilinear form. 

\begin{prop}Every q-closable sesquilinear form $\ggs$ has a q-closed extension $\overline{\Omega}$ in $\H$.
\end{prop}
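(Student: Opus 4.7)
The plan is to define $\overline{\Omega}$ as the unique continuous sesquilinear extension of $\Omega$ to the reflexive Banach space $\E\sggs$ furnished by (cl.2), and then to identify $\E\sggs$ with a dense subspace of $\H$ so that $\overline{\Omega}$ becomes a sesquilinear form on a dense subspace of $\H$ in its own right. First I would show that the natural inclusion $\D \hookrightarrow \H$ extends to a continuous \emph{injection} $\gi: \E\sggs \to \H$. Given $\tilde\xi \in \E\sggs$, one picks a $\|\cdot\|\sggs$-Cauchy sequence $\{\xi_n\}\subset \D$ with $\|\xi_n - \tilde\xi\|\sggs \to 0$; condition (cl.1) forces $\{\xi_n\}$ to be Cauchy in $\H$ as well, so it converges to some vector $\gi(\tilde\xi)\in \H$, and a standard diagonal argument shows that this limit is independent of the approximating sequence. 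Injectivity is the critical point: if $\gi(\tilde\xi)=0$, i.e., $\|\xi_n\|\to 0$, then condition (s.2) applied to the Cauchy sequence $\{\xi_n\}$ in $\|\cdot\|\sggs$ yields $\|\xi_n\|\sggs \to 0$, hence $\tilde\xi=0$ in $\E\sggs$. This would let us regard $\E\sggs$ as a dense subspace of $\H$ sitting above its own norm.

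Next, the boundedness (cl.3) allows the unique extension of $\Omega$ by continuity from $\D\times \D$ to a bounded sesquilinear form $\overline{\Omega}:\E\sggs \times \E\sggs \to {\mb C}$ satisfying $|\overline{\Omega}(\xi,\eta)|\leq \beta \|\xi\|\sggs \|\eta\|\sggs$ for all $\xi,\eta\in \E\sggs$. Since $\overline{\Omega}$ restricts to $\Omega$ on $\D\times \D$, it is a genuine extension. It then remains to endow the domain $\E\sggs$ with a norm $\|\cdot\|_{\overline{\Omega}}$ and check the q-closed axioms: taking $\|\cdot\|_{\overline{\Omega}}=\|\cdot\|\sggs$, axiom (cl.1) is automatic from the continuous extension of (s.1); the compatibility conditions (s.1) and (s.2) transfer from $\D$ to $\E\sggs$ by a routine approximation argument using the joint continuity of the two norms along Cauchy sequences and the injectivity of $\gi$; axiom (cl.2) is trivial because $\E\sggs$ is already complete and reflexive in this norm; and (cl.3) is the bound just established. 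Since $\E\sggs[\|\cdot\|_{\overline{\Omega}}]$ is itself a reflexive Banach space, $\overline{\Omega}$ is not merely q-closable but q-closed.

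The main obstacle is the injectivity of $\gi$, where the closability-flavoured condition (s.2) does the genuine work; without it one would only obtain a possibly non-Hausdorff quotient of $\E\sggs$ in $\H$, and $\overline{\Omega}$ would not be well-defined as a form on a subspace of $\H$. Once $\gi$ is known to be injective, every remaining step is a standard extension-by-continuity argument and requires no further hypothesis beyond what Definition \ref{defn_q_closed} already provides.
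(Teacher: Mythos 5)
Your proof is correct and follows essentially the same route as the paper: the compatibility conditions (in particular (s.2)) identify $\E\sggs$ with a subspace $D(\overline{\Omega})$ of $\H$, and (cl.3) then lets $\Omega$ extend by continuity to a bounded, hence q-closed, form on that subspace. You have merely spelled out the injectivity of the embedding and the verification of the axioms, which the paper leaves implicit.
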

\begin{proof} The assumption implies that $\E\sggs$ can be identified with a subspace $D(\overline{\Omega})$ of $\H$ and, by ({\sf{ cl.3}}), $\Omega$ is bounded in $\D[\|\cdot\|\sggs]$; thus it extends to $D(\overline{\Omega})$.
\end{proof}
\beex Every densely defined $\iota$-regular sesquilinear form $\Omega$ is q-closable. Indeed, if $\Psi \in \M(\Omega)$ is $\iota$-absolutely continuous, then one can choose, for instance, $\|\xi\|\sggs = (\|\xi\|^2 + \|\xi\|_{\sss \Psi}^2)^{1/2}$ and verify easily the conditions of Definition \ref{defn_q_closed}.
\enex
From now on, we confine ourselves to consider q-closed sesquilinear forms on $\D \times \D$.
\bedefi \label{defn_39}Let $\ggs$ be a q-closed sesquilinear form defined on $\D\times \D$, with $\D$ a dense subspace of the Hilbert space $\H$. We say that $\ggs$ is {\em solvable} if there exists a sesquilinear form $\gb$, bounded in $\H$, such that
\begin{itemize}
\item[(a.1)]  ${\sf N}(\ggs+\gb)=\{0\}$
\item[(a.2)]For every $\Lambda\in \E\sggs^\times$ there exists  $\xi \in \E\sggs$ such that
$$ \ip{\Lambda}{\eta}= (\ggs+\gb)(\xi, \eta), \quad \forall \eta \in \E\sggs.$$
\end{itemize}
The set of all bounded $\gb$'s satisfying these conditions is denoted by $\mathfrak{P}(\ggs)$.
\findefi

Let $\ggs$ be a q-closed sesquilinear form, $\gb $ a bounded sesquilinear form on $\H \times \H$ and $\ggs\sgb:= \ggs+\gb$. If $\xi \in \D$, we define a conjugate linear functional $\ggs\sgb^\xi$ on $\E\sggs$ by
$$\ip{ \ggs\sgb^\xi}{\eta}=\ggs(\xi,\eta)+\gb(\xi,\eta).$$
Then, $\ggs\sgb^\xi$ is bounded and so $\ggs\sgb^\xi\in \E\sggs^\times$.

Let $X\sgb: \E\sggs \to \E\sggs^\times$ be the linear map on $\E\sggs$ defined by $X\sgb\xi =\ggs\sgb^\xi$.
Then,  $X\sgb \in {\mc B}(\E\sggs, \E\sggs^\times)$ and the following lemma holds.
\begin{lemma} \label{lemma_38} The following statements are equivalent.
\begin{itemize}\item[(i)]$\gb \in \mathfrak{P}(\ggs)$.
\item[(ii)] $X\sgb$ is a bijection of $\E\sggs$ onto $\E\sggs^\times$.
\item[(iii)] $X\sgb$ has a bounded inverse $X\sgb^{-1}: \E\sggs^\times \to \E\sggs$.
\end{itemize}
\end{lemma}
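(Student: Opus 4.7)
My plan is to translate the two conditions (a.1) and (a.2) of Definition \ref{defn_39} into injectivity and surjectivity of the bounded operator $X\sgb \in {\mc B}(\E\sggs, \E\sggs^\times)$, and then invoke the open mapping theorem. First, I would record that $X\sgb$ is indeed bounded: by ({\sf cl.3}) one has $|\ggs(\xi,\eta)|\le \beta \|\xi\|\sggs\|\eta\|\sggs$, while the boundedness of $\gb$ on $\H$ together with the embedding $\E\sggs \hookrightarrow \H$ coming from ({\sf cl.1}) gives $|\gb(\xi,\eta)|\le \|\gb\|\,\|\xi\|\sggs\|\eta\|\sggs$. In particular $\ggs+\gb$ extends by continuity from $\D\times\D$ to $\E\sggs \times \E\sggs$, and the defining relation $\ip{X\sgb\xi}{\eta} = (\ggs+\gb)(\xi,\eta)$ persists on all of $\E\sggs \times \E\sggs$ by density of $\D$; in the same way the Section \ref{sect_preliminaries} null set ${\sf N}(\ggs+\gb)$ is promoted to $\ker X\sgb \subset \E\sggs$.

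For the implication (i) $\Rightarrow$ (ii), condition (a.2) is literally the assertion that every $\Lambda \in \E\sggs^\times$ lies in the range of $X\sgb$, so $X\sgb$ is surjective. For injectivity, if $\xi_0 \in \E\sggs$ satisfies $X\sgb \xi_0 = 0$, then $(\ggs+\gb)(\xi_0, \eta) = 0$ for every $\eta \in \E\sggs$ and, in particular, for every $\eta \in \D$; read in the extended sense on $\E\sggs$, condition (a.1) forces $\xi_0 = 0$. The reverse implication (ii) $\Rightarrow$ (i) is obtained by unwinding this same translation, since surjectivity yields (a.2) directly and injectivity of the extension yields (a.1) a fortiori on $\D$.

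The equivalence (ii) $\Leftrightarrow$ (iii) is then a direct consequence of the open mapping theorem: by ({\sf cl.2}) the space $\E\sggs$ is a reflexive Banach space and $\E\sggs^\times$ is its conjugate Banach dual, so $X\sgb$ is a continuous linear bijection between Banach spaces and admits a bounded inverse. The converse is trivial, since the mere existence of $X\sgb^{-1}\in {\mc B}(\E\sggs^\times, \E\sggs)$ makes $X\sgb$ bijective. The only mild subtlety, which I flag in paragraph one, is the identification of the $\D$-based null set ${\sf N}(\ggs+\gb)$ with the Banach-space kernel $\ker X\sgb$; beyond this, the result is essentially a repackaging of the Banach isomorphism theorem and requires no new ideas beyond those already developed in the paper.
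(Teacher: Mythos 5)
The paper states Lemma \ref{lemma_38} without proof, so there is nothing to compare against; your argument supplies exactly the proof the authors evidently intend, and it is correct. Since $\ggs$ is assumed q-closed, $\D[\|\cdot\|\sggs]$ already coincides with the reflexive Banach space $\E\sggs$, so the identification of ${\sf N}(\ggs+\gb)$ with $\ker X\sgb$ that you flag as the ``only mild subtlety'' is immediate (and in the merely q-closable case it would follow, as you note, from density of $\D$ in $\E\sggs$ together with the joint $\|\cdot\|\sggs$-continuity of $\ggs+\gb$ coming from ({\sf cl.1}) and ({\sf cl.3})). With that in place, (a.1) and (a.2) are literally injectivity and surjectivity of $X\sgb$, and the equivalence of (ii) and (iii) is the bounded inverse theorem applied to the Banach spaces $\E\sggs$ and $\E\sggs^\times$; only completeness, not reflexivity, is used at this last step.
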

Using Lemma \ref{lemma_38} and James' theorem \cite[Sec. 1.13]{megg} one can prove the following
\begin{prop} Let $\ggs$ be a q-closed sesquilinear form and $\gb $ a bounded sesquilinear form on $\H \times \H$. The following statements are equivalent.
\begin{itemize}
\item[(i)]$\gb \in \mathfrak{P}(\ggs)$.
\item[(ii)] There exist $c_1, c_2, c'_1, c'_2>0$ such that
\begin{itemize}
\item[(ii.a)] for every $\xi \in \E\sggs$ there exists $\bar \eta\in \E\sggs$ such that
$$ c_1\|\xi\|\sggs \leq |(\ggs + \gb)(\xi, \bar\eta)| \leq c_2\|\xi\|\sggs;$$
\item[(ii.b)] for every $\eta \in \E\sggs$ there exists $\bar \xi\in \E\sggs$ such that
$$ c'_1\|\eta\|\sggs \leq |(\ggs + \gb)(\bar\xi, \eta)| \leq c'_2\|\eta\|\sggs.$$
\end{itemize}
\end{itemize}
\end{prop}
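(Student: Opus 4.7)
The plan is to apply Lemma \ref{lemma_38} to recast (i) as the statement that $X\sgb: \E\sggs \to \E\sggs^\times$ is a bijection with bounded inverse, and then to use James' theorem to translate this operator-theoretic condition into the explicit inequalities (ii.a)--(ii.b). By Lemma \ref{lemma_38}, condition (i) is equivalent to the existence of constants $m, M > 0$ satisfying
$$ m\|\xi\|\sggs \leq \|X\sgb\xi\|^\times \leq M\|\xi\|\sggs, \qquad \forall \xi \in \E\sggs,$$
together with surjectivity of $X\sgb$. Since $\E\sggs$ is reflexive, the Closed Range Theorem makes the surjectivity of $X\sgb$ equivalent to $X\sgb^\dag$ being bounded below. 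Hence (i) reduces to the conjunction of the following two conditions: $X\sgb$ is bounded below, and $X\sgb^\dag$ is bounded below.

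For (i)$\Rightarrow$(ii), I would fix $\xi\in\E\sggs$ and invoke James' theorem: the reflexivity of $\E\sggs$ ensures that the functional $X\sgb\xi\in\E\sggs^\times$ attains its norm on the closed unit ball, so there exists $\bar\eta\in\E\sggs$ with $\|\bar\eta\|\sggs\leq 1$ and $|(\ggs+\gb)(\xi,\bar\eta)|=\|X\sgb\xi\|^\times$. Combined with the two-sided bound on the dual norm, this yields (ii.a) with $c_1=m$ and $c_2=M$; applying the same argument to $X\sgb^\dag$ (which is likewise a bijection with bounded inverse) gives (ii.b). Conversely, for (ii)$\Rightarrow$(i), condition (ii.a) — with $\bar\eta$ of controlled norm, as enforced by the upper inequality — delivers $\|X\sgb\xi\|^\times \geq c_1\|\xi\|\sggs$, so $X\sgb$ is bounded below; (ii.b) analogously gives $X\sgb^\dag$ bounded below. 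Reflexivity together with the Closed Range Theorem then upgrades the latter to surjectivity of $X\sgb$, and Lemma \ref{lemma_38} yields (i).

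The main obstacle is the careful bookkeeping around the witness vectors: reflexivity is invoked twice — once through James' theorem, to guarantee actual (rather than merely approximate) attainment of the dual norm in (i)$\Rightarrow$(ii), and once through the Closed Range Theorem in (ii)$\Rightarrow$(i), to pass from a bounded-below adjoint to a surjective operator. The most delicate point is reconciling the literal form of (ii) with the operator-theoretic reading: the upper inequality $|(\ggs+\gb)(\xi,\bar\eta)| \leq c_2\|\xi\|\sggs$ must be interpreted as preventing $\|\bar\eta\|\sggs$ from being inflated, so that the lower inequality translates into a genuine uniform lower bound on $\|X\sgb\xi\|^\times$ in terms of $\|\xi\|\sggs$.
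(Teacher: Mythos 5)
Your strategy is exactly the one the paper intends: the paper gives no proof of this proposition at all, saying only that it follows from Lemma \ref{lemma_38} and James' theorem, and your reduction of (i) to ``$X\sgb$ bounded below and $X\sgb^\dag$ bounded below, plus surjectivity via reflexivity'' together with norm attainment on the reflexive space $\E\sggs$ is the natural way to carry that programme out. The direction (i)$\Rightarrow$(ii) is correct as you present it: since $X\sgb$ is a bijection with bounded inverse, $\|X\sgb\xi\|\sggs^\times$ is squeezed between $m\|\xi\|\sggs$ and $M\|\xi\|\sggs$, and norm attainment produces a unit-norm witness $\bar\eta$ with $|(\ggs+\gb)(\xi,\bar\eta)|=\|X\sgb\xi\|\sggs^\times$; the same applies to $X\sgb^\dag$.

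The gap is in (ii)$\Rightarrow$(i), precisely at the point you yourself flag as ``the most delicate''. The upper inequality $|(\ggs+\gb)(\xi,\bar\eta)|\le c_2\|\xi\|\sggs$ does \emph{not} prevent $\|\bar\eta\|\sggs$ from being inflated: it bounds the value of the form at the witness, not the norm of the witness, so it cannot be ``interpreted as'' the control you need. Concretely, if $X\sgb$ and $X\sgb^\dag$ are merely injective (say $X\sgb$ is injective with dense but non-closed range, so $\gb\notin\mathfrak{P}(\ggs)$ by Lemma \ref{lemma_38}), then for every $\xi\neq 0$ one may pick any $\bar\eta$ with $(X\sgb\xi)(\bar\eta)\neq 0$ and rescale it so that $|(\ggs+\gb)(\xi,\bar\eta)|=\|\xi\|\sggs$ exactly; condition (ii) then holds verbatim with $c_1=c_2=c_1'=c_2'=1$. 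So the literal statement of (ii) does not imply (i); it becomes true only once the witnesses are normalized, e.g.\ $\|\bar\eta\|\sggs\le 1$ and $\|\bar\xi\|\sggs\le 1$ (equivalently, once (ii) is read as the pair of inf-sup conditions), which is surely the authors' intent. With that normalization your converse argument is sound: $\|X\sgb\xi\|\sggs^\times\ge c_1\|\xi\|\sggs$ gives closed range and injectivity, (ii.b) gives injectivity of $X\sgb^\dag$, and reflexivity plus the closed range theorem yields surjectivity, hence (i) by Lemma \ref{lemma_38}. The defect therefore lies in the proposition's wording rather than in your strategy, but as written your proof of (ii)$\Rightarrow$(i) rests on an inference (upper bound on the form value $\Rightarrow$ bound on $\|\bar\eta\|\sggs$) that is false and needs to be replaced by an explicit normalization of the witnesses.
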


\beex
We show that every closed sectorial form, with  domain $\D$ in Hilbert space $\H$, is solvable. Following Remark \ref{rem_47}, we define $\|\xi\|\sggs= (\|\xi\|^2 + \Re\Omega(\xi,\xi)-\delta\|\xi\|^2)^{1/2}$, $\xi\in \D$, and set $\H\sggs=\D[\|\cdot\|\sggs]$.
In this case the triplet \eqref{triplet} consists of Hilbert spaces $\H\sggs\hookrightarrow \H\hookrightarrow \H\sggs^\times$.

Let $\lambda \in {\mb C}$ and suppose $\xi \in {\sf N}(\ggs-\lambda\gi)$. Then $\ggs (\xi, \xi)=\lambda \|\xi\|^2$. Hence, $\|\xi\|\sggs= ({\Re\lambda\|\xi\|^2-\delta \|\xi\|^2 +\|\xi\|^2})^{1/2}$.
 Thus, if $\Re\lambda \leq \delta-1$, we necessarily have $\|\xi\|\sggs= 0$.

For shortness, we put $\ggs_\lambda:= \ggs- \lambda \gi$.
Then, if $\Re\lambda \leq\delta-1$, $\ggs_\lambda$ is bounded and coercive. Indeed,
$$\Re\ggs_\lambda(\xi,\xi)= \Re \ggs(\xi,\xi)-\Re\lambda \|\xi\|^2 = \|\xi\|^2\sggs + (\delta-1-\Re\lambda)\|\xi\|^2\geq \|\xi\|^2\sggs.$$
Hence, by the Lax-Milgram theorem, if $\Lambda\in \H\sggs^\times$, there exists $\xi \in \H\sggs$ such that
$$ \ip{\Lambda}{\eta}= \ggs_\lambda(\xi, \eta), \quad \forall \eta \in \H\sggs.$$
\enex

Now we prove the following result.
\begin{thm}\label{thm_59} Let $\ggs$ be a q-closed solvable sesquilinear form defined on $\D\times\D$, with $\D$ a dense domain in Hilbert space $\H$.
Then, there exists a closed operator $T$ with domain $D(T)\subset \D$, dense in $\H$, such that
$$ \ggs(\xi, \eta)= \ip{T\xi}{\eta}, \quad \forall \xi\in D(T), \eta \in \E\sggs.$$
In particular, if the sesquilinear form $\gb$ of Definition \ref{defn_39} has the form  $\gb =-\lambda \gi$, with $\lambda \in {\mb C}$, then  $\lambda \in \varrho(T)$, the resolvent set of $T$.
\end{thm}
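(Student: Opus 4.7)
The plan is to build $T$ directly from the bounded operator $X:\E\sggs\to\E\sggs^\times$ canonically associated with $\ggs$ by $\ip{X\xi}{\eta}=\ggs(\xi,\eta)$, using the solvability perturbation $\gb\in\mathfrak{P}(\ggs)$ only to guarantee invertibility via Lemma \ref{lemma_38}. Let $B\in\mc B(\H)$ be the bounded operator representing $\gb$, i.e.\ $\gb(\xi,\eta)=\ip{B\xi}{\eta}$; since $\E\sggs\hookrightarrow\H\hookrightarrow\E\sggs^\times$, $B$ extends to a bounded map $\E\sggs\to\E\sggs^\times$ and $X\sgb=X+B$ coincides on $\E\sggs$ with the operator of Lemma \ref{lemma_38}. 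By that lemma $X\sgb$ is a bijection $\E\sggs\to\E\sggs^\times$ with bounded inverse. Because $\ggs$ is q-closed, $\E\sggs=\D$ as sets, so any subspace of $\E\sggs$ sits in $\D$.

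Define
\[
D(T):=\{\xi\in\E\sggs:X\xi\in\H\},\qquad T\xi:=X\xi,\ \xi\in D(T),
\]
viewing $\H$ as a subspace of $\E\sggs^\times$. Since the duality extends $\ip{\cdot}{\cdot}$, $\ggs(\xi,\eta)=\ip{X\xi}{\eta}=\ip{T\xi}{\eta}$ for $\xi\in D(T),\eta\in\E\sggs$. As $B\xi\in\H$ for every $\xi\in\E\sggs$, the equivalence $X\xi\in\H\iff X\sgb\xi\in\H$ gives the key identification $D(T)=X\sgb^{-1}(\H)$.

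For the density of $D(T)$ in $\H$, I would invoke the reflexivity of $\E\sggs$, which (as recalled after \eqref{eqn_BGT}) makes $\H$ dense in $\E\sggs^\times$. Since $X\sgb^{-1}:\E\sggs^\times\to\E\sggs$ is a homeomorphism, $D(T)=X\sgb^{-1}(\H)$ is $\|\cdot\|\sggs$-dense in $\E\sggs$, hence $\|\cdot\|$-dense in $\H$ using ({\sf cl.1}) and the density of $\E\sggs$ in $\H$. For closedness, suppose $\xi_n\in D(T)$ with $\xi_n\to\xi$ and $T\xi_n\to\zeta$ in $\H$. Then $X\sgb\xi_n=T\xi_n+B\xi_n\to\zeta+B\xi$ in $\H$, thus in $\E\sggs^\times$; boundedness of $X\sgb^{-1}$ gives $\xi_n\to X\sgb^{-1}(\zeta+B\xi)$ in $\E\sggs$. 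The two limits of $\xi_n$, one in $\H$ and one in $\E\sggs$, must agree in $\E\sggs^\times$ (both embeddings being continuous and injective), so $\xi\in\E\sggs$, $X\sgb\xi=\zeta+B\xi$, whence $X\xi=\zeta\in\H$, $\xi\in D(T)$, $T\xi=\zeta$.

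For the resolvent statement, with $\gb=-\lambda\gi$ we have $B=-\lambda I$, so on $D(T)$ the equality $X\sgb\xi=X\xi-\lambda\xi=(T-\lambda I)\xi$ holds inside $\H$. Since $X\sgb$ is a bijection $\E\sggs\to\E\sggs^\times$ and $D(T)=X\sgb^{-1}(\H)$, the restriction $T-\lambda I:D(T)\to\H$ is a bijection whose inverse factors as $\H\hookrightarrow\E\sggs^\times\xrightarrow{X\sgb^{-1}}\E\sggs\hookrightarrow\H$, hence is bounded; thus $\lambda\in\varrho(T)$. The most delicate step will be the closedness argument, where the two different limits of $\xi_n$ (in $\H$ and in $\E\sggs$) must be reconciled in the common space $\E\sggs^\times$ using injectivity of the embeddings of the triplet.
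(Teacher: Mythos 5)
Your construction is correct and, in its skeleton, coincides with the paper's: both define the operator on $\{\xi\in\E\sggs : X\sgb\xi\in\H\}$, prove density of this domain by pulling back the density of $\H$ in $\E\sggs^\times$ through the homeomorphism $X\sgb^{-1}$, and obtain $\lambda\in\varrho(T)$ by factoring $(T-\lambda I)^{-1}$ through $\H\hookrightarrow\E\sggs^\times\xrightarrow{X\sgb^{-1}}\E\sggs\hookrightarrow\H$. (Your choice to set $T\xi=X\xi$ directly, rather than first defining $S=X\sgb\restriction{}$ and then $T=S-B$ as the paper does, yields literally the same operator, since $B$ maps $\E\sggs$ into $\H$.) The one step where you genuinely diverge is closedness. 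The paper argues by duality: it identifies $S^*$ with the restriction of $X\sgb^\dag$ to $\{\eta\in\E\sggs: X\sgb^\dag\eta\in\H\}$, shows $D(S^*)$ is dense by the same pull-back argument applied to $(X\sgb^\dag)^{-1}$, and concludes $S=(S^*)^{**}$ is closed "by a symmetry argument". You instead give a direct sequential proof: if $\xi_n\to\xi$ and $T\xi_n\to\zeta$ in $\H$, then $X\sgb\xi_n\to\zeta+B\xi$ in $\E\sggs^\times$, so $\xi_n\to X\sgb^{-1}(\zeta+B\xi)$ in $\E\sggs$, and by ({\sf cl.1}) this limit must agree with $\xi$ in $\H$, whence $\xi\in D(T)$ and $T\xi=\zeta$. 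Your route is more elementary and self-contained -- it avoids the unproved claims that $D(S^*)$ is exactly $\{\eta: X\sgb^\dag\eta\in\H\}$ and that $(S^*)^*=S$ -- at the cost of not producing the description of $T^*$ that the paper's argument yields as a by-product. Both are sound; yours is arguably the cleaner proof of the closedness claim as stated.
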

\begin{proof} Let $\gb \in \mathfrak{P}(\ggs)$ and $X\sgb \in {\mc B}(\E\sggs, \E\sggs^\times)$ be defined as above.

By Lemma \ref{lemma_38}, $X\sgb$ has a bounded inverse $X\sgb^{-1}$.
Put $D(S)=\{\xi \in \E\sggs: X\sgb\xi \in \H\}$. Then $D(S)$ is dense in $\H$. Indeed, taking into account that $\E\sggs$ is dense in $\H$, it suffices to show that $D(S)$ is dense in $\E\sggs [\|\cdot\|\sggs]$. Since $\H$ { is dense in } $\E\sggs ^\times$, then, for every $f\in\E\sggs$ there exists $\{g_n\}\subset\H$ with $\|g_n-X\sgb f\|_{\ggs} ^\times\rightarrow 0;$
since $X\sgb^{-1}$ is bounded, it follows that $$D(S)\ni X\sgb^{-1}g_n\stackrel{\|\cdot\|\sggs}{\rightarrow}  f,$$
which proves the statement.

Define $S\xi=X\sgb\xi$, for $\xi \in D(S)$.
It is clear that
$$ \ip{S\xi}{\eta} = \ip{X\sgb\xi}{\eta}=\ggs\sgb(\xi,\eta), \quad \forall \xi \in \D(S),\, \eta \in \E\sggs.$$
Now we want to prove that $S$ is closed in $\H$.
Since $X\sgb\in {\mc B}( \E\sggs, \E\sggs^\times)$, it has an adjoint $X\sgb^\dag \in {\mc B}( \E\sggs, \E\sggs^\times)$.
Moreover, as it is easy to see,  $D(S^*)=\{\eta \in \E\sggs: X\sgb^\dag\eta \in \H\}$.
Hence, $X\sgb^\dag\upharpoonright _{D(S^*)}=S^*$.

Since $X\sgb^\dag$ is invertible, with bounded inverse, in similar way to what done for $D(S)$ one can prove that $D(S^*)$ is dense. By a symmetry argument we can prove that $(S^*)^*=S$. Hence $S$ is closed.
The proof is complete if we define $T$ by putting $D(T)=D(S)$ and $T=S-B$, where $B$ is the unique bounded operator in $\H$ such that $\gb(\xi,\eta)=\ip{B\xi}{\eta}, \forall \xi,\eta \in \H$.

The second statement can be proved as follows.

Let $\gb=-\lambda \gi \in  \mathfrak{P}(\ggs)$, $\lambda \in {\mb C}$. Then, as seen before, $S^{-1}$ is the restriction to $\H$ of $X\sgb^{-1}$.  Now recall that $X\sgb^{-1}$ is continuous from $\E\sggs^\times$ to $\E\sggs$, i.e., there exists $\beta>0$ such that $$\|X\sgb^{-1}\Lambda\|\sggs\leq \beta\|\Lambda\|_{\ggs^\times},\quad\forall\Lambda\in\E\sggs^\times.$$ Then comparing the topologies, we conclude that there exists $\beta'>0$ such that
$$\|S^{-1}f\|_\H\leq \beta'\|f\|_\H ,\quad \forall f \in\H$$ and so $\lambda \in \varrho(T)$.
\end{proof}

 The closed operator $T$ which represents $\ggs$ is not unique, in general.

\berem We point out that the proof of Theorem \ref{thm_59} does not strictly require that $\Omega$ is q-closed in $\D$. In fact, if $\Omega$ is only q-closable, we can replace $\Omega$ with $\overline{\Omega}$ and $\D$ with $D(\overline{\Omega})$ and only small technical modifications are needed in the proof. But, of course, the domain $D(T)$ of the operator $T$ whose existence is claimed in that theorem will be a subspace of $D(\overline{\Omega})$ and thus it might have a very small intersection with the initial domain $\D$.
\enrem

\subsection{Solvability and numerical range} It is of course of particular interest the case where $\mathfrak{P}(\ggs)$ contains {\em scalars}, i.e., for some $\lambda \in {\mb C}$, $-\lambda\gi \in \mathfrak{P}(\ggs)$.

For examining this situation, it is convenient to consider the {\em numerical range} $\gn\sggs$ of $\ggs$, i.e., the set $\gn\sggs=\{\ggs(\xi, \xi); \xi \in \D, \|\xi\|=1\}$.

\begin{thm} Let $\ggs$ be a q-closed sesquilinear form on $\D\times \D$. Assume that the norm $\|\cdot\|\sggs$ which makes $\D$ into a reflexive Banach space $\E[\|\cdot\|\sggs]$ satisfies the following condition
 \begin{itemize}
\item[\sf{(qc)}] If $\{\xi_n\}$ is a sequence in $\D$ such that $\|\xi_n\|\to 0$ and $\ds\lim_{n \to \infty}|\ggs(\xi_n, \xi_n)|=0$, then $\|\xi_n\|\sggs\to 0$.
\end{itemize}

If $\lambda \not\in \overline{\gn\sggs}$, then  $-\lambda\gi \in \mathfrak{P}(\ggs)$.
 \end{thm}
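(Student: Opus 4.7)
The plan is to reduce the claim, via Lemma \ref{lemma_38}, to showing that $X:=X_{-\lambda\gi}\in\mc B(\E\sggs,\E\sggs^\times)$ is a bijection. Set $d:=\mathrm{dist}(\lambda,\overline{\gn\sggs})>0$; since $\ggs(\xi,\xi)/\|\xi\|^2\in\gn\sggs$ for every $\xi\neq 0$, homogeneity yields the coercivity inequality
\begin{equation}\label{eqn_plan_star}
|\ggs(\xi,\xi)-\lambda\|\xi\|^2|\geq d\|\xi\|^2,\quad \forall\xi\in\D,
\end{equation}
which already implies (a.1) of Definition \ref{defn_39}: if $(\ggs-\lambda\gi)(\xi,\eta)=0$ for all $\eta$, testing with $\eta=\xi$ and using \eqref{eqn_plan_star} forces $\xi=0$.

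I will then show that $X$ is bounded below. Assume for contradiction that there is $\{\xi_n\}\subset\E\sggs$ with $\|\xi_n\|\sggs=1$ and $\|X\xi_n\|^\times\to 0$. From
\begin{equation*}
|\ggs(\xi_n,\xi_n)-\lambda\|\xi_n\|^2|=|\ip{X\xi_n}{\xi_n}|\leq \|X\xi_n\|^\times\|\xi_n\|\sggs \to 0
\end{equation*}
and \eqref{eqn_plan_star} I get $\|\xi_n\|\to 0$, hence also $|\ggs(\xi_n,\xi_n)|\to 0$, so hypothesis {\sf(qc)} forces $\|\xi_n\|\sggs\to 0$, a contradiction. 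Thus $X$ is injective with closed range. For surjectivity I pass to the adjoint $X^\dag\in\mc B(\E\sggs,\E\sggs^\times)$, which coincides with the operator associated with the form $\ggs^*-\bar\lambda\gi$. The form $\ggs^*$ is q-closed with the same norm $\|\cdot\|\sggs$ and still satisfies {\sf(qc)} (note $|\ggs^*(\xi,\xi)|=|\ggs(\xi,\xi)|$), and $\bar\lambda\notin\overline{\gn_{\ggs^*}}$ because $\overline{\gn_{\ggs^*}}$ is the complex conjugate of $\overline{\gn\sggs}$. Applying the previous argument to the pair $(\ggs^*,\bar\lambda)$ shows that $X^\dag$ is also bounded below. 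Reflexivity of $\E\sggs$, built into Definition \ref{defn_q_closed}, allows me to invoke the standard duality principle that boundedness below of the adjoint is equivalent to surjectivity of the operator, concluding that $X$ is bijective. The open mapping theorem supplies a bounded inverse and Lemma \ref{lemma_38} delivers $-\lambda\gi\in\mathfrak{P}(\ggs)$.

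The main obstacle is precisely the passage from closed range to full range: boundedness below of $X$ alone would not suffice. The reflexivity requirement in the definition of q-closed form becomes crucial here, and the argument works only because the hypotheses are invariant under the adjunction $(\ggs,\lambda)\leftrightarrow(\ggs^*,\bar\lambda)$, which lets me reuse the bounded-below estimate for both $X$ and $X^\dag$. The remaining delicate point is identifying $X^\dag$ with the operator associated to $\ggs^*-\bar\lambda\gi$, which requires a careful bookkeeping of the conjugate-linear pairing between $\E\sggs$ and $\E\sggs^\times$.
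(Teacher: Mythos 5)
Your proof is correct, but it is organized quite differently from the paper's. The paper argues by contraposition: assuming $-\lambda\gi\notin\mathfrak{P}(\ggs)$ it splits into three cases (nontrivial kernel; trivial kernel with dense range, forcing $X_\lambda^{-1}$ to be unbounded; non-dense range) and in each case produces unit vectors $\psi_n$ with $\ggs(\psi_n,\psi_n)\to\lambda$, concluding $\lambda\in\overline{\gn\sggs}$. You instead prove bijectivity of $X_{-\lambda\gi}$ directly: the quantitative coercivity estimate $|\ggs(\xi,\xi)-\lambda\|\xi\|^2|\ge d\|\xi\|^2$ with $d=\mathrm{dist}(\lambda,\overline{\gn\sggs})$, combined with {\sf(qc)}, gives that $X_{-\lambda\gi}$ is bounded below (this is essentially the contrapositive of the paper's ``unbounded inverse'' case, so the core mechanism is shared), and then you obtain surjectivity by observing that the hypotheses are invariant under $(\ggs,\lambda)\mapsto(\ggs^*,\bar\lambda)$, so $X_{-\lambda\gi}^\dag$ is also bounded below, and invoking the closed-range duality theorem in the reflexive space $\E\sggs$. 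The paper's treatment of the surjectivity obstruction is lighter: when the range is not dense it simply picks, by reflexivity, a nonzero annihilating vector $\eta$ and tests the form on the diagonal, $\ggs(\eta,\eta)-\lambda\|\eta\|^2=\ip{X_\lambda\eta}{\eta}=0$, which already places $\lambda$ in $\gn\sggs$ without ever introducing $\ggs^*$; in fact your argument only needs injectivity of $X^\dag$ at that point, since bounded-belowness of $X$ already closes the range, so the second full bounded-below estimate is more than is required. What your route buys is a cleaner, case-free statement with an explicit lower bound in terms of $d$, at the price of the bookkeeping needed to identify $X^\dag$ with the operator of $\ggs^*-\bar\lambda\gi$ under the conjugate-linear pairing — a point you rightly flag and which does go through. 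Both arguments use reflexivity in the same essential place (identifying $(\E\sggs^\times)^\times$ with $\E\sggs$ to detect non-density of the range), and both hinge on {\sf(qc)} in the same way, so I regard your proof as a valid and mildly streamlined variant rather than a fundamentally new argument.
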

\begin{proof}
If $-\lambda \gi\not\in \mathfrak{P}(\ggs)$, then either $N(\ggs-\lambda\gi) \neq \{0\}$ or (a.2) of Definition \ref{defn_39} is not satisfied.
If $N(\ggs-\lambda\gi) \neq \{0\}$, then there exists $\xi \in \D$, with $\|\xi\|=1$, such that $\ggs(\xi,\eta)-\lambda \ip{\xi}{\eta}=0$, for every $\eta \in \D$. Then, in particular $\ggs(\xi, \xi)=\lambda$. Hence, $ \lambda \in \gn\sggs$.

Now assume that  $-\lambda \gi \not\in \mathfrak{P}(\ggs)$ and $N(\ggs-\lambda\iota) = \{0\}$. Then $X_\lambda:=X_{-\lambda \gi}$ has an inverse which is not everywhere defined in $\E\sggs^\times$. If the range $\ran (X_\lambda)$ of $X_\lambda$ is dense then $X_\lambda^{-1}$ is necessarily unbounded. Then there exists a sequence $\{\xi_n\} \subset \E\sggs$ such that $\|\ggs_\lambda^{\xi_n}\|\sggs^\times =1$, for every $n \in {\mb N}$ and $\|\xi_n\|\sggs \to \infty$, as $n \to \infty$. Thus, the sequence $\{\tau_n\}$ defined by
$$ \tau_n:= \frac{\ggs_\lambda^{\xi_n}}{\|\xi_n\|\sggs}$$ converges to $0$ in $\E\sggs^\times$ and
$\|X_\lambda^{-1}\tau_n\|\sggs =1$, for every $n \in {\mb N}$. Put $\varphi_n=X_\lambda^{-1}\tau_n$, for every $n \in {\mb N}$. Then, $\ggs_\lambda^{\varphi_n}= X_\lambda \varphi_n =\tau_n =\frac{\ggs_\lambda^{\xi_n}}{\|\xi_n\|\sggs}\to 0$, as $n\to \infty$.

Since $\|\varphi_n\|\sggs =1$, for every $n \in {\mb N}$, we get
\begin{equation}\label{eqn_gi}|\ggs(\varphi_n, \varphi_n)-\lambda\ip{\varphi_n}{\varphi_n}|\leq \sup_{\|\eta\|\sggs\leq 1}\left|\ip{\ggs_\lambda^{\varphi_n}}{\eta}\right|=\|\ggs_\lambda^{\varphi_n}\|\sggs^\times\to 0, \; \mbox{as }n\to \infty.\end{equation}

Let us put $\psi_n= \varphi_n/\|\varphi_n\|$. Then,
$$ \|\varphi_n\|^2 |\ggs(\psi_n,\psi_n)-\lambda|\to 0 \; \mbox{as }n\to \infty.$$
The condition {\sf{(qc)}} implies that $\inf_{n\in {\mb N}}\|\varphi_n\|>0$. Indeed, were $\inf_{n\in {\mb N}}\|\varphi_n\|=0$, then, since $\|\varphi_n\|>0$ for every $n \in {\mb N}$, there would be a subsequence $\{\varphi_{n_k}\}$ converging to $0$. Then, by \eqref{eqn_gi}, $\lim_{k\to\infty}|\ggs(\varphi_{n_k}, \varphi_{n_k})|=0$.
Thus, by   {\sf{(qc)}} $\|\varphi_{n_k}\|\sggs \to 0$, a contradiction. This, in turn, implies that
  $|\ggs(\psi_n,\psi_n)-\lambda|\to 0, \; \mbox{as }n\to \infty$.  Hence, $\lambda \in \overline{\gn\sggs}$.

Finally, if the range $\ran (X_\lambda)$ is not dense, by the reflexivity of the Banach space $\E\sggs$, there exists $\eta \in \E\sggs$, such that $\ip{X_\lambda \xi}{\eta}=0$, for every $\xi \in \E\sggs$. Clearly we may suppose  $\|\eta\|=1$. Then, we have
$$ \ggs(\eta,\eta)-\lambda = \ip{X_\lambda \eta}{\eta}=0.$$ Thus $\lambda \in \gn\sggs$.
\end{proof}
\beex The condition {\sf{(qc)}} is obviously satisfied by a closed sectorial form, with $\|\cdot\|\sggs= (\Re \Omega (\cdot, \cdot) + (1-\delta) \|\cdot\|^2)^{1/2}$, where $\delta$ is the lower bound of $\Re\Omega.$
\enex

{
\section{Examples}
We collect in this section some examples illustrating the ideas developed in this paper.

\beex Let $\omega$ denote the space of all complex sequences and $\omega_F$ the subspace of $\omega$ consisting of the sequences with a finite number of nonzero components. For $\{a_n\}, \{b_n\}\in \omega_F$,
we take as $\Theta$  the restriction  to $\omega_F \times \omega_F$ of the usual inner product $\iota$ of $\ell^2$ and
$$\Omega(\{a_n\}, \{b_n\})=\sum_{n=1}^\infty \lambda_n a_n \overline{b}_n,$$
with $\{\lambda_n\}\in \omega$.
Then $\Omega$ is a sesquilinear form on $\omega_F \times \omega_F$. If we put
$$\Psi(\{a_n\}, \{b_n\})= \sum_{n=1}^\infty {|\lambda_n|} a_n \overline{b}_n,\quad \{a_n\}, \{b_n\}\in \omega_F$$
then,
$$|\Omega(\{a_n\}, \{a_n\})| \leq \Psi(\{a_n\}, \{a_n\}) , \quad \forall \{a_n\} \in \omega_F.$$
Hence $\M(\Omega)$ is nonempty.

The form $\Psi$ is closable, since it is the restriction to $\omega_F \times \omega_F$ of the closed form
$$\widetilde{\Psi}(\{a_n\}, \{b_n\})= \sum_{n=1}^\infty {|\lambda_n|} a_n \overline{b}_n,$$
defined on the domain
$$ D(\widetilde{\Psi})=\left\{ \{a_n\} \in \ell^2:\, \sum_{n=1}^\infty {|\lambda_n|} |a_n|^2<\infty \right\} .$$

It follows that $\Psi$ is $\Theta$-absolutely continuous  and $\Omega$ is $\Theta$-regular.
Let $H$ be operator defined on $D(H):=D(\widetilde{\Psi})$ by $H\{a_n\}= \{\sqrt{|\lambda_n|}\, a_n\}$
and $Y$ the operator defined on $\omega_F$ by $Y\{a_n\}= \{e^{i\phi_n}a_n\}$ where $\phi_n \in {\rm arg}{\lambda_n}$. 
Then it is immediate to see that
$$ \Omega(\{a_n\}, \{b_n\})= \ip{HY\{a_n\}}{H\{b_n\}}.$$

But is is also evident that $\Omega$ can also be represented as
$$ \Omega(\{a_n\}, \{b_n\})= \ip{T\{a_n\}}{\{b_n\}},$$
where $T$ is the closed operator defined as follows

 $$\left\{\begin{array}{l}D(T)=\left\{\{a_n\}\in \ell^2:\, \sum_{n=1}^\infty |\lambda_n|^2 |a_n|^2 <\infty\right\}\\ T\{a_n\}=\{\lambda_na_n\} \end{array}.\right. $$

 If, for instance,  $\lambda_n = ne^{in}$, the corresponding form $\Omega$ is neither bounded nor sectorial.

 Now we show that the sesquilinear form $\widetilde\Omega$, with domain $D(\widetilde{\Psi})$, defined by
 $$\widetilde{\Omega}(\{a_n\}, \{b_n\})=\sum_{n=1}^\infty \lambda_n a_n \overline{b}_n,\quad \{a_n\}, \{b_n\}\in D(\widetilde{\Psi})$$
 satisfies, for $\Upsilon=-\lambda\iota$, with $\lambda \not\in \overline{\{\lambda_n; n\in {\mb N}\}}$, the condition (ii) of Definition \ref{defn_39}, with the choice $\| \cdot \|_{\sss\Omega} = (\| \cdot \|_2^2 + \widetilde{\Psi}(\cdot,\cdot))^{1/2}$, where $\| \cdot \|_2$ denotes the $\ell^2$-norm. The proof of (i) is in fact very simple.

 Since $\widetilde{\Psi}$ is closed, $D(\widetilde{\Psi})[\| \cdot \|_{\sss\Omega}]$ is a Hilbert space. If $\Lambda \in D(\widetilde{\Psi})^\times$, then by Riesz's lemma, there exists sequence $\{a_n\}\in D(\widetilde{\Psi})$ such that
 $$ \ip{\Lambda}{\{b_n\}} = \sum_{n=1}^\infty a_n\overline{b}_n + \sum_{n=1}^\infty |\lambda_n| \,a_n\overline{b}_n, \quad \forall \{b_n\}\in D(\widetilde{\Psi}).$$
 Let us consider the sequence $\{c_n\}$, with $c_n= \frac{1+|\lambda_n|}{\lambda_n -\lambda}\,a_n$. Then $\{c_n\}\in D(\widetilde{\Psi})$ and
 $$ (\Omega-\lambda\iota) (\{c_n\}, \{b_n\})=  \sum_{n=1}^\infty (1+|\lambda_n|)a_n\overline{b}_n =\ip{\Lambda}{\{b_n\}}, \quad \forall \{b_n\}\in D(\widetilde{\Psi}).$$
\enex
\beex Let $X$ be a set, ${\mc M}$ a $\sigma$-algebra of subsets of $X$, and  $\theta$ a positive measure on ${\mc M}$.
 We denote by $\D$ the linear span of the characteristic functions of $\theta$-measurable subsets of $X$ and define
  $$\Theta (f,g)= \int_X f(x)\overline{g(x)}d\theta(x), \quad f,g \in \D.$$

Let us now consider a complex measure $\omega$ on  ${\mc M}$. Then, as is known \cite[Theorem 6.4]{rudin}, $\omega$ is a finite measure on $X$ and its total variation $|\omega|$ is a positive measure.
We define a sesquilinear form $\Omega$ on $\D$ by
$$ \Omega (f,g)=\int_X f(x)\overline{g(x)}d\omega(x), \quad f,g \in \D.$$
One can easily prove that, for every $f,g \in \D$,
$$ \left| \int_X f(x)\overline{g(x)}d\omega(x)\right|\leq \left(\int_X |f(x)|d|\omega|(x) \right)^{1/2} \left(\int_X |g(x)|d|\omega|(x) \right)^{1/2}.$$
If $|\omega|$ is absolutely continuous with respect to $\theta$, then the sesquilinear form $|\Omega|$ defined on $\D \times \D$ is $\Theta$-absolutely continuous \cite[Lemma 5.1]{hassi} and, therefore, $\Omega$ is $\Theta$-regular. As a consequence of the Radon-Nikodym theorem for measures one has $d\omega(x) = e^{i \phi(x)}k(x)d\theta$, with $\phi$ a real-valued measurable function and $k\in L^1(\theta), k\geq 0$. We finally get the representation
$$ \Omega(f,g)= \int_X f(x)\overline{g(x)}e^{i \phi(x)}k(x)d\theta, \quad f,g \in \D.$$
Identifying $H$ with the multiplication operator by $\sqrt{k(x)}$ and $Y$ with the multiplication operator by $e^{i \phi(x)}$,
we get, according to Theorem \ref{thm_positivedom}, the representation
$$\Omega(f,g) =\ip{HYf}{Hg}_{\sss \theta}, \quad f,g \in \D,$$
the inner product on the right hand side being that of $L^2(\theta)$. \enex

\beex Let $S$, $T$ be closable linear operators in Hilbert space $\H$. We suppose that $\D:=D(S)\cap D(T)$ is dense in $\H$.
We will show that the sesquilinear form $\Omega$ on $\D\times\D$ defined by
$$\Omega(\xi,\eta)= \ip{S\xi}{T\eta}, \quad \xi, \eta \in \D$$
is $\iota$-regular (where $\iota$ denotes the inner product of $\H)$.
Let us consider the positive selfadjoint operator $H=(I+S^*\overline{S}+ T^*\overline{T})^{1/2}$ whose domain contains $\D$. Then it is easy to see that the positive sesquilinear $\Psi$ defined by
$\Psi(\xi,\eta)=\ip{H\xi}{H\eta}$, $\xi, \eta \in \D$, is a member of $\M(\Omega)$ and that $\Psi$ is $\iota$-absolutely continuous.  Thus $\Omega$ is $\iota$-regular and can be represented as
$$\Omega(f,g) =\ip{HYf}{Hg}, \quad f,g \in \D.$$
In most cases, the form of $Y$ remains implicit.
\enex
}


\noindent{\bf Acknowledgement.} This research was supported by the Gruppo Nazionale per l'Analisi Matematica e le sue Applicazioni (GNAMPA) of INdAM.
We thank Mr. R. Corso for pointing out some inaccuracies in a previous version of this paper.

\end{document}